\DeclareMathOperator{\Ad}{Ad}
\DeclareMathOperator{\ad}{ad}
\DeclareMathOperator{\End}{End}
\DeclareMathOperator{\grad}{grad}
\renewcommand{\Im}{\operatorname{Im}}
\DeclareMathOperator{\En}{E}
\newcommand{\Heis}{\mathrm{H}}
\newcommand{\RR}{\mathbb R}
\newcommand{\CC}{\mathbb C}
\newcommand{\la}{\langle}
\newcommand{\ra}{\rangle}
\newcommand{\af}{\mathfrak a}
\newcommand{\bb}{\mathfrak b}
\newcommand{\ggo}{\mathfrak g}
\newcommand{\hh}{\mathfrak h}
\newcommand{\kk}{\mathfrak k}
\newcommand{\mm}{\mathfrak m}
\newcommand{\nn}{\mathfrak n}
\newcommand{\sso}{\mathfrak{so}}
\newcommand{\vv}{\mathfrak v}
\newcommand{\zz}{\mathfrak z}
\theoremstyle{plain}
\newtheorem{theorem}{Theorem}[section]
\newtheorem{cor}[theorem]{Corollary}
\newtheorem{proposition}[theorem]{Proposition}
\newtheorem{lem}[theorem]{Lemma}
\newtheorem*{utheorem}{Theorem}
\theoremstyle{definition}
\newtheorem{defn}[theorem]{Definition}
\theoremstyle{remark}
\newtheorem{remark}[theorem]{Remark}
\numberwithin{equation}{section}
\begin{document}


\title[The geodesic flow on  nilmanifolds]{The geodesic flow on  nilmanifolds}

\author{Alejandro Kocsard}
\address{A. Kocsard. IME - Universidade Federal Fluminense. Rua M\'rio Santos Braga S/
N, 24020-140
Niter\'oi, RJ, Brazil}
\email{akocsard@id.uff.br}

\author{Gabriela P. Ovando}
\address{G. Ovando. CONICET - Universidad Nacional de Rosario. Depto. de Matemática, ECEN - FCEIA, Pellegrini 250. 2000 Rosario}
\email{gabriela@fceia.unr.edu.ar}

\author{Silvio Reggiani}
\address{S. Reggiani. CONICET - Universidad Nacional de Rosario. Depto. de Matemática, ECEN - FCEIA, Pellegrini 250. 2000 Rosario}
\email{reggiani@fceia.unr.edu.ar}

\thanks{Partially supported by ANPCyT and SCyT - Universidad Nacional de Rosario}
\subjclass{53C30 53C25 22E25 57S20}
\begin{abstract} In this paper we study the geodesic flow on nilmanifolds equipped with a left-invariant metric. We write the underlying definitions and find general formulas for the Poisson involution. As an example we develop the Heisenberg Lie group equipped with its canonical metric. We prove that a family of first integrals giving the complete integrability can be read off at the Lie algebra of the isometry group. We also explain the complete integrability on compact quotients and for any invariant metric.
\end{abstract}

\maketitle

\section{Introduction}

Given a smooth manifold $M$, any complete Riemannian structure
$\langle\cdot,\cdot\rangle$ induces the geodesic flow
$\Gamma\colon M\times\mathbb{R}\to M$ which can be defined as the
Hamiltonian flow associated to the energy function
$E(v):=1/2\langle v,v\rangle$ on $TM$.

Usually, this flow is not integrable in the sense of Liouville and it
is generally expected that the integrability of the geodesic flow
imposes important obstructions to the topology of the supporting
manifold.

However, contrasting some results of Taimanov~\cite{Ta1,Ta2} on
topological obstructions for real-analytic manifolds supporting
real-analytic integrable geodesic flows with some smooth examples of
smoothly integrable geodesic flows on manifolds that do not satisfy
the above obstructions constructed by Butler~\cite{Bu0}, and Bolsinov
and Taimanov~\cite{BT}, we observe that the regularity of first
integrals plays a fundamental role that nowadays is not completely
well understood.

For that reason, when dealing with locally homogeneous manifolds to have the
possibility of constructing real-analytic first integrals is very
desirable.

In the advances reached in the theory of Hamiltonian systems in the
1980's one can recognize the role of Lie theory in the study of
several examples. This is the case of the so known Adler-Kostant-Symes
\cite{Ad,Ko,Sy} scheme used for the study of some mechanical systems
and of the so known Thimm's method for the study of the geodesic flow
\cite{Th}. In both cases the main examples arise from semisimple Lie
groups. These results appeared parallel to the many studies given by
the Russian school which can be found for instance in \cite{F-T}. Some
non-homogeneous examples are found in \cite{P-S}.

For other Hamiltonian systems on non semisimple Lie groups only few examples and generalizations are known in the case of the geodesic flow. This is the situation for nilpotent and solvable Lie groups or even their compact quotients which are locally homogeneous manifolds. 

On the one hand Butler proved the integrability of the geodesic flow on Heisenberg-Reiter 2-step nilpotent Lie groups, among them one can find the Heisenberg Lie group  \cite{Bu1}. He also found an algebraic condition for non-integrability. On the other hand, Eberlein started a study of the geometry concerning the geodesic flow on 2-step Lie groups following his own and longer study in this topic, giving a good material and references in \cite{Eb1, Eb2}. This study of Eberlein is much more general and is mixed with many other geometrical questions. 

In the present paper we concentrate in the geodesic flow of Lie groups endowed with a metric invariant by left-translations. In the first part we write the basic definitions and get general conditions and formulas for the involution of first integrals making use of the Lie theory tools, that is, assuming some natural identifications which are not present in the work of Butler. However, all this stuff is good explained along the work. We write the underlying results on the tangent Lie groups and put special emphasis on 2-step nilpotent Lie groups, we take as nilmanifold after \cite{Wo},  for which there exists a developed geometrical theory and several examples and applications, see \cite{Eb}. 

We apply the results we get to the case of the Heisenberg Lie group $\Heis_n$ of dimension $2n+1$ equipped with the canonical left-invariant metric. Although this is a naturally reductive space  the methods of Thimm do not apply in this case. 

One of our motivations is to investigate the nature of the first integrals one can construct. We prove that all the first integrals we get can be visualized on the isometry group. In fact, this is the case of quadratic polynomials which are invariant, so as first integrals arising from Killing vector fields. Recall that, given a Killing vector field $X^*$ on a Riemannian manifold $M$, one has a first integral on the tangent Lie bundle $TM$ defined by $f_{X^*}(v)= \la X^*, v\ra$. We proved that
\begin{enumerate}[(i)]

\item There is a bijection   between the set of quadratic first integrals of the geodesic flow on $\Heis_n$ ---with the canonical metric--- and the Lie subalgebra of skew-symmetric derivations of the Heisenberg Lie algebra $\hh_n$, so that involution of quadratic first integrals would correspond to a torus of skew-symmetric derivations (Theorem 4.1). Actually, a general formulation of  quadratic polynomials on a 2-step nilpotent Lie algebras to be first integrals is found so as the pairwise commutativity condition. 

\item The linear morphism $X^* \to f_{X^*}$ builds a Lie algebra isomorphism onto its image.
\end{enumerate}

Item (ii) gives an answer to the question formulated in \cite{Th} and it is the first example we found of this situation.

Making use of all results we found a new family of first integrals for the geodesic flow on $\Heis_n$. 

After that, we consider an arbitrary lattice $\Lambda$ and, passing to the quotient, we explain the integrability of the geodesic flow on the compact space $\Lambda \backslash \Heis_n$. Finally, we consider any left-invariant metric on the Heisenberg Lie group $\Heis_n$ and prove that the corresponding geodesic flow is also completely integrable.

\section{The canonical symplectic structure}

Let $M$ denote a differentiable manifold. Its cotangent bundle $T^*M$ admits a canonical symplectic form $\tilde{\Omega}$ constructed as  follows: for every $\xi \in T_\eta(T^*M)$ define the 1-form $\tilde{\Theta}$ by
$$
\tilde{\Theta}(\xi) := \eta(d\tilde{\pi}(\xi)),
$$
where $\tilde{\pi}: T^*M \to M$ is the canonical projection which
assigns to $\eta\in T^*_pM$ the base point $p\in M$. Now the
symplectic form of $T^*M$ is given by
$$
\tilde{\Omega} := -d\tilde{\Theta}.
$$

If $M$ is a Riemannian manifold with metric tensor $\langle\cdot, \cdot\rangle$, one induces a symplectic form $\Omega$ in the tangent bundle $TM$ of $M$. In fact, take 
$$
\Theta(\xi) := \langle v, d\pi(\xi)\rangle,
$$
where $\xi \in T_v(TM)$ and $\pi\colon TM \to M$  denotes the canonical projection, and then  define
$$
\Omega := -d\Theta.
$$

It is easy to see that both $\Theta$ and $\Omega$ are the pull-back by the natural diffeomorphism $\phi: TM \to T^*M$ given by $\phi(v) = \langle v, \cdot\rangle$, that is $\Theta = \phi^*(\tilde{\Theta})$ and $\Omega = \phi^*(\tilde{\Omega})$.

We shall concentrate  the study in the case where $M$ is a Lie group endowed with a left-invariant metric. Let $N$ denote a Lie group endowed with a left-invariant metric $\langle\cdot, \cdot\rangle$. Let $\mathfrak n$ be the Lie algebra of $N$. The tangent bundle $TN$ of $N$ is parallelizable and it is identified with
$$
TN \simeq N \times \mathfrak n  = \{(p, Y): p \in N,\, Y \in \mathfrak n\}.
$$

With this identification, we consider the pair $(p, Y)$ as the tangent
vector $dL_p(Y_e) \in T_pN$. Recall that $TN\simeq N\times \nn$ is a
Lie group regarded as the direct product of $N$ and the abelian group
$\nn$ and so
$$
T_{(p, Y)}(TN) \simeq \mathfrak n \times \mathfrak n = \{(U, V): U, V \in \mathfrak n\}.
$$

When the base point  is not clear from the context, we will use the notation $(U, V)_{(p, Y)}$ for a tangent vector in $T_{(p, Y)}(TN)$, $p\in N, Y\in \nn$. The pair $(U, V)$ denotes  a left-invariant field on $TN$ (i.e., the left-invariant vector field $U$ on $N$ in the first coordinate, and the constant field $V$ in $\mathfrak n$ in the second one).

With this identification, and since the metric on $N$ is
left-invariant, the canonical $1$-form on $TN$ has the following form 
$$
\Theta_{(p, Y)}(U, V) = \langle Y, U\rangle.
$$

Recall that the curve on $TN$ starting at $(p,Y)$ 
$$
c(t) = (p\exp(tU), tV + Y)
$$ 
has initial speed $(U, V) \in T_{(p, Y)}(TN)$. So, 
$$
(U, V)_{(p, Y)}\Theta(U', V') = \frac{d}{dt}\bigg|_0 \Theta((U', V')_{c(t)}) = \frac{d}{dt}\bigg|_0 \langle tV + Y, U'\rangle,
$$
and thus 
$$
(U, V)_{(p, Y)}\Theta(U', V') = \langle V, U'\rangle.
$$
Finally, 
\begin{align*}
\Omega_{(p,Y)}((U, V), (U', V')) & = -d\Theta((U, V), (U', V')) \\
& = -\{(U, V)_{(p, Y)}\Theta(U', V') - (U', V')_{(p, Y)}\Theta(U, V) \\
& \hspace{12pc} - \Theta_{(p, Y)}([(U, V), (U', V')])\} \\
& = -\langle V, U'\rangle + \langle U, V'\rangle + \Theta_{(p, Y)}([U, U'], 0),
\end{align*}
and therefore
\begin{equation}\label{symplectic-tan}
\Omega_{(p, Y)}((U, V), (U', V')) =  \langle U, V'\rangle -\langle V, U'\rangle + \langle Y, [U, U']\rangle.
\end{equation}

\begin{remark} A similar work can be done in the cotangent bundle
$$
T^*N \simeq N \times \mathfrak n^* = \{(h, \alpha): h \in N,\, \alpha \in \mathfrak n^*\}.
$$
Here the pair $(h, \alpha)$ is identified with $\alpha \circ dL_{h^{-1}}|_h$, after identifying $\mathfrak n \simeq T_eN$ as usual. Notice that if we move $h$ but fix  $\alpha$  we get  a left-invariant $1$-form on $N$. Using similar notations and identifications as in the case of the
tangent bundle, one can get:
$$
T_{(h, \alpha)}(T^*N) = \mathfrak n \times \mathfrak n^* = \{(U, \beta): U \in \mathfrak n,\, \beta \in \mathfrak n^*\},
$$
and hence, 
$$
\tilde \Theta_{(h, \alpha)}(U, \beta) = \alpha_e(\delta L_{h^{-1}}U_h).
$$

A curve in $T^*N$ with initial speed $(U, \beta) \in T_{(h, \alpha)}(T^*N)$ is given by
$$
c(t) = (h\exp(tU), t\beta + \alpha).
$$

Analogously to the tangent bundle one can show that
$$
(U, \beta)_{(h, \alpha)}\Theta(U', \beta') = \beta(U'),
$$
and
$$
\tilde\Omega_{(h, \alpha)}((U, \beta), (U', \beta')) = -\beta(U') + \beta'(U) + \alpha([U, U']).
$$
\end{remark}

 The Lie group $N$ is equipped with the product metric of $N\times  \nn$ which  is invariant under the left-translations $L_g$ for $g\in N$. In other words the Lie group $N$ acts on $TN$ by isometries.  From now we shall assume that $TN$ is endowed with this metric. 

 The next definitions introduce the Hamiltonian vector field and the gradient field.

\begin{defn} Let $f: TN \to \mathbb R$ be a smooth function. 
\begin{itemize}
\item The Hamiltonian vector field $X_f$ of $f$ is defined by
\begin{equation}\label{hamvec}
df_{(p, Y)}(W) = \Omega_{(p, Y)}(X_f, W) \qquad \mbox{ for } W\in T_p N,
\end{equation}
with Hamiltonian equation
\begin{equation}\label{hameq}
c'(t)=X_f(c(t)) \qquad \mbox{ for a smooth curve } c: \RR \to TN.
\end{equation}
\item The gradient field for $f$, denoted $\grad f$, is the vector field on $TN$ given by
\begin{equation}\label{gradvec}
df_{(p, Y)}(W) = \la \grad_{(p,Y)} f, W\ra \qquad \mbox{ for } W\in T_p N.
\end{equation}

\end{itemize}
\end{defn}

Let $\grad_{(p, Y)}f = (U, V)$, for $U, V\in \nn$ denote the gradient field of a smooth function  $f: TN \to \mathbb R$. We shall compute the Hamiltonian vector field $X_f$ in terms of the gradient vector field. 

Let us denote $X_f(p, Y) = (U'', V'')$. If $(U', V')$ is an arbitrary element of $T_{(p, Y)}TN$ we have that 
\begin{align*}
  \langle U, U'\rangle + \langle V, V'\rangle & = \langle\grad_{(p, Y)}f, (U', V')\rangle \\
  & = df_{(p, Y)}(U', V') \\
  & = \Omega_{(p, Y)}((U'', V''), (U', V')) \\
	& =  \langle U'', V'\rangle -\langle V'', U'\rangle  + \langle Y, [U'', U']\rangle.
\end{align*}
Now choose  $U' = 0$, so that the equalities above reduce to
$$\langle V, V'\rangle =  \langle U'', V'\rangle$$
implying  that $U'' = V$. By making use of this  in  the relations above one gets
\begin{equation}\label{eq:V''}
\langle U, U'\rangle = -\langle V'', U'\rangle + \langle Y, [V, U']\rangle \qquad \mbox{ for all } U'\in \nn. 
\end{equation}

Hence the Hamiltonian vector field for $f: TN \to \RR$  is given by
\begin{equation}\label{hamvf}
 X_f(p,Y)=(V, -U + \ad^t(V)Y) \qquad \mbox{ for }\quad \grad_{(p,Y)}f=(U,V)
\end{equation}
where $\ad^t(V)$ denotes the transpose of $\ad(V)$ relative to the metric on $\nn$.

Recall that a symplectic structure on a symplectic manifold $(N, \Omega)$ defines a {\em Poisson bracket} on $C^{\infty}(M)$ in the following way. Let $f, g: TN \to \RR$ be smooth functions, then its Poisson bracket is given by
\begin{equation}\label{poisson}
\{f, g\}{(p,Y)}=\Omega_{(p,Y)}(X_f, X_g)
\end{equation}

and one says that $f$ and $g$ are {\em in involution} if $\{f, g\}=0$, alternatively $f, g$ Poisson commute.

\begin{lem} \label{lema1} Let $(N, \la \,,\, \ra)$ denote a Lie group equipped with a left-invariant metric and let $f:TN \to \RR$ denote a smooth function. If one denotes the gradient of $f$ by  $\grad_{(p,Y)}f =(U, V)$, then 
\begin{enumerate}[(i)]
\item the Hamiltonian vector field of $f$ is given by
\begin{equation}\label{hamiltonianvf}
X_f(p, Y)=(V, \ad^t(V)(Y)-U)
\end{equation}
where $\ad^t(V)$ denotes the transpose of $\ad(V)$ with respect to the given left-invariant metric on $N$.

\item The Poisson bracket of a pair of functions $f,g:TN \to \RR$ is given by
\begin{equation}\label{poissonfg}
\begin{array}{rcl}
\{f,g\}(p,Y) & = & -\Omega_{(p,Y)}(\mathrm A \grad f, \mathrm A \grad g) \\
& = & \la V',U\ra - \la V, U'\ra - \la Y, [V,V']\ra, 
\end{array}
\end{equation}
where $\mathrm A (U,V)=(V,U)$.
\end{enumerate}
\end{lem}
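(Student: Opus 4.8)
Part (i) is essentially already established in the lines preceding the statement, so the plan is simply to package that computation. Writing $\grad_{(p,Y)}f=(U,V)$ and $X_f(p,Y)=(U'',V'')$, the defining relation \eqref{hamvec} combined with the explicit form \eqref{symplectic-tan} of the symplectic structure forces $U''=V$ upon choosing $U'=0$, and then, for every $U'\in\nn$,
\[
\la U,U'\ra = -\la V'',U'\ra + \la Y,[V,U']\ra .
\]
The one manipulation I would isolate is $\la Y,[V,U']\ra=\la Y,\ad(V)U'\ra=\la \ad^t(V)Y,U'\ra$; nondegeneracy of the metric then yields $V''=\ad^t(V)Y-U$, which is exactly \eqref{hamiltonianvf}.

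For part (ii) the plan is a direct substitution followed by careful bookkeeping. Setting $\grad f=(U,V)$ and $\grad g=(U',V')$, part (i) gives $X_f=(V,\ad^t(V)Y-U)$ and $X_g=(V',\ad^t(V')Y-U')$. I would feed these into $\{f,g\}=\Omega_{(p,Y)}(X_f,X_g)$ and expand using \eqref{symplectic-tan}. To clear the transpose operators I would apply their defining property twice, namely $\la V,\ad^t(V')Y\ra=\la[V',V],Y\ra$ and $\la\ad^t(V)Y,V'\ra=\la Y,[V,V']\ra$, after which no $\ad^t$ remains in the expression.

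The crux, and the only place a sign or the antisymmetry of the bracket could be mishandled, is collecting the bracket contributions. The expansion produces three of them, $-\la Y,[V,V']\ra$ (from $\la[V',V],Y\ra$), a further $-\la Y,[V,V']\ra$ (on clearing the transpose in the second slot), and $+\la Y,[V,V']\ra$ (from the $\la Y,[\,\cdot\,,\,\cdot\,]\ra$ summand of \eqref{symplectic-tan}); their sum is $-\la Y,[V,V']\ra$. Together with the remaining contributions $\la U,V'\ra$ and $-\la V,U'\ra$, and using symmetry of the metric, this gives
\[
\{f,g\}(p,Y)=\la V',U\ra-\la V,U'\ra-\la Y,[V,V']\ra ,
\]
the second equality in \eqref{poissonfg}. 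For the compact first equality I would compute $-\Omega_{(p,Y)}(\mathrm A\grad f,\mathrm A\grad g)=-\Omega_{(p,Y)}((V,U),(V',U'))$ straight from \eqref{symplectic-tan} and check that it reproduces the same right-hand side; conceptually, $X_f$ and $\mathrm A\grad f=(V,U)$ differ only by the vertical vector $(0,\ad^t(V)Y)$, and the cancellation noted above is precisely the assertion that these extra transpose terms do not contribute to the bracket. I anticipate no genuine obstacle beyond tracking the two transpose identities and the antisymmetry of $[\cdot,\cdot]$ consistently.
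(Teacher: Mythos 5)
Your proposal is correct and follows essentially the same route as the paper: part (i) is the packaging of the computation preceding the lemma (with the transpose identity $\la Y,[V,U']\ra=\la \ad^t(V)Y,U'\ra$ made explicit), and part (ii) is the same direct substitution of $X_f$, $X_g$ into $\Omega$, with the three bracket contributions cancelling to $-\la Y,[V,V']\ra$ exactly as in the paper's chain of equalities. Your additional verification of the first equality $\{f,g\}=-\Omega(\mathrm A\grad f,\mathrm A\grad g)$, which the paper asserts without separate computation, is a harmless and correct supplement.
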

\begin{proof} The statement $(i)$  was proved above. For the proof of the second statement assume $\grad_{(p,Y)} f=(U,V)$ and  $\grad_{(p,Y)} g=(U',V')$. Then 
$$
\begin{array}{rcl}
\{f,g\}(p,Y) & = & \Omega_{(p,Y)}(X_f, X_g) \\
& = &  \Omega_{(p,Y)}(V, \ad^t(V)(Y)-U), (V', \ad^t(V')(Y)-U')\\
& = & \la V',U\ra - \la V, U'\ra + \la Y, [V',V]\ra \\
& = & -\left( \la V, U'\ra - \la V',U\ra  + \la Y, [V,V']\ra \right)
\end{array}
$$
which concludes the proof.
\end{proof}

\subsection{Example: 2-step nilpotent Lie groups}

 Assume $N$ is a connected 2-step nilpotent  Lie group with Lie algebra  $\nn$. Whenever $N$ is furnished with a left-invariant metric $\la\,,\,\ra$, one can read several geometrical features of $N$ at the Lie algebraic level. In fact, the Riemannian metric is determined at the identity element so that the Lie algebra $\nn$ can be decomposed into a orthogonal direct sum
$$
\nn=\vv \oplus \zz \qquad \quad\mbox{ with  } \vv=\zz^{\perp}
$$
where as usual  $\zz$ denotes the center of $\nn$. The
Lie bracket on $\nn$ induces for $Z\in \zz$ the skew-symmetric  linear map
$j(Z):\vv \to \vv$  given by
\begin{equation}\label{br}
\begin{array}{rcl}
\la [U,V], Z\ra & = & \la j(Z) U,V\ra \qquad \mbox{ for } Z\in \zz, U,V\in \vv.
\end{array}
\end{equation}

Conversely, let $(\af, \la \,,\,\ra_{\af})$ and $(\bb, \la \,,\,\ra_{\bb})$ denote
vector spaces endowed with respective inner products. Let $\nn = \bb \oplus \af$ denote the
 direct sum as vector spaces 
and let $\la\,,\,\ra$ denote the product metric on $\nn$ given by
\begin{equation}\label{met}
\la \,,\,\ra_{|_{\af \times \af}}=\la \,,\,\ra_{\af}\qquad
\la \,,\,\ra_{|_{\bb \times \bb}}=\la \,,\,\ra_{\bb} \qquad
\la \af, \bb \ra=0.
\end{equation}

Let $j:\af \to \End(\bb)$ be a linear map such that $j(Z)$ is skew-symmetric with respect to $\la \,,\,\ra_{\bb}$ for every $Z\in \af$.  Then $\nn$
 becomes a 2-step nilpotent Lie algebra if one defines a Lie bracket by the relation in (\ref{br}) and so that $\af$ is contained in the center of $\nn$,  $\af \subseteq \zz$; actually 
$$\begin{array}{rcl}
\zz & = & \af \oplus \{ V \in \bb : [V, U]=0 \mbox{ for all } U\in \bb\} \\
& = & \af \oplus \cap_{Z\in\af} \ker j(Z).
\end{array}
$$
By using the translations on the left, the corresponding connected Lie group $N$ is endowed with a Riemannian metric.

\medspace

The inner product $\la\,,\,\ra$ produces  a decomposition of the center of the Lie algebra $\nn$   as a  orthogonal direct sum as vector spaces
$$\zz=\ker j \oplus C(\nn),$$
where $C(\nn)$ denotes the commutator of $\nn$ and the linear map $j$ is injective if and only if there is no Euclidean factor in the
De Rahm decomposition  of the simply connected Lie group $(N, \la\,,\,\ra)$ (see
\cite{Go}).

Let $\nn$ be a 2-step nilpotent Lie algebra. We say that $\nn$ is {\em non-singular} if $\ad(X) : \nn \to \zz$  is surjective for all $X \in \nn-\zz$. One can show that 
the following properties are
equivalent - see for instance \cite{Eb}:
\begin{enumerate}[(i)]
\item  $\nn$ is non-singular;
\item for every inner product $\langle\cdot, \cdot\rangle$ on $\nn$ and every nonzero element $Z$ of $\zz$ the
linear map $j(Z)$ is non-singular;
\item for some inner product $\la \,,\,\ra$ on $\nn$ and every non-zero element $Z$ of $\zz$ the
linear map $j(Z)$ is non-singular.
\end{enumerate}

Let us denote by $W_{\mathfrak v}$ (resp.\ $W_{\mathfrak z}$) the $\mathfrak v$-component (resp.\ $\mathfrak z$-component) of an element $W \in \mathfrak n$. Hence for arbitrary elements $Y, V, U'\in \nn$ one has
$$
\langle Y, [V, W]\rangle = \langle Y_{\mathfrak z}, [V_{\mathfrak v}, W_{\mathfrak v}]\rangle = \langle j(Y_{\mathfrak z})V_{\mathfrak v}, W_{\mathfrak v}\rangle,
$$
and the symplectic structure  on $TN$ can be written in the following way
$$
\Omega_{(p, Y)}((U, V), (U', V')) = \langle U, V'\rangle -\langle V, U'\rangle  + \langle j(Y_{\zz}) U_{\vv}, U_{\vv}'\rangle.
$$

In particular if $N$ is 2-step nilpotent and $f: TN \to \RR$ is a smooth map with gradient field $\grad_{(p,Y)} f=(U,V)$ then Lemma \ref{lema2} says
\begin{equation}\label{hamiltonian2}
X_f(p, Y) = (V,  j(Y_{\mathfrak z})V_{\mathfrak v}-U).
\end{equation}
Furthermore, for $f,g: TN \to \RR$  smooth maps with respective gradients given as $\grad_{(p,Y)} f=(U,V)$ and $\grad_{(p,Y)} g=(U',V')$, the Poisson bracket is
\begin{equation}\label{poisson2}
\{f,g\}(p,Y) = \la V',U\ra - \la V, U'\ra + \la j(Y_{\zz}) V'_{\vv}, V_{\vv}\ra.
\end{equation}

\section{The geodesic flow}

Let $N$ denote a Lie group with tangent bundle $TN$. 
In the following paragraphs we will denote by $\langle\cdot, \cdot\rangle$ both the left-invariant metric on $N$ and the product metric on $TN \simeq  N \times \mathfrak n$, that is, the product of the left-invariant metric on $N$ and the  inner product on $\nn$ induced by the left-invariant metric of $N$. 

The energy function $\En: TN \to \mathbb R$ is defined by
$$
\En(p, Y) := \frac{1}{2}\langle Y, Y\rangle_p=\frac12\la Y, Y\ra,
$$
since $\la \cdot, \cdot \ra$ is a left-invariant metric.

Notice that 
\begin{equation} \label{difen}
\begin{array}{rcl}
  d\En_{(p, Y)}(U, V) & = & \frac{d}{dt}\big|_0\En(p \exp(tU), tV + Y) \\
  & =  & \frac{d}{dt}\big|_0 \frac{1}{2}\left(t^2\langle V, V\rangle + 2t\langle Y, V\rangle + \langle Y, Y\rangle\right) \\
  & = & \langle Y, V\rangle.
\end{array}
\end{equation}

The gradient field  of the energy function $\grad\En$ is the vector field on $TN$  implicitly defined by the equation
$$
d\En_{(p, Y)}(U, V) = \langle\operatorname{grad}_{(p, Y)}\En, (U, V)\rangle,
$$
and thus the computations given in (\ref{difen}) imply
$$
\grad_{(p, Y)}\En = (0, Y). 
$$

The {\em geodesic field} on $TN$ is the Hamiltonian vector field of the energy function $X_{\En}$. The Equality (\ref{hamiltonianvf}) for  the energy function implies
$$X_{\En}(p, Y)=(Y, \ad^t(Y) Y),$$
and whenever $N$ is 2-step nilpotent we get
\begin{equation}\label{energy2s}
X_{\En}(p,Y)=(Y, j(Y_{\zz})Y_{\vv}).
\end{equation}

Let $(M, \la\,,\,\ra) $ denote a  complete Riemannian manifold with 
tangent bundle $TM$. For each $v\in TM$ and $t\in \RR$ define $\Gamma^t(v)=\gamma_v'(t)$
 the velocity at time $t$ of the unique geodesic with initial velocity $v$.
The fact that $M$ is complete implies that the geodesics of $M$ are defined on $\RR$. Thus for every  $v \in T_p M$ the curve on $TM$ given by $\Gamma^t(v)$ is defined  for all $t \in \RR$. 
 One can  check that $\Gamma^t\circ \Gamma^s=\Gamma^{t+s}$  
for all $s,t \in \RR$. The geodesic vector field is taken as the vector field on $TM$ with flow transformations  $\{\Gamma^t\}$.

On the other hand one has the flow of the Hamiltonian vector field $X_{\mathrm E}$ in $TM$ which is determined by the energy function $\mathrm E: TM \to \RR$, that is $E(v) =\frac12 < v, v >$ for all $v\in TM$. Now the flow of $X_E$ coincides with $\{\Gamma^t\}$. See for instance Section 5 in \cite{Eb2}.  

\medspace

This also applies on a Lie group $N$ endowed with a left-invariant Riemannian metric. Moreover in  \cite{Eb1,Eb2} one can see the definition and properties of the geodesic flow in the Lie algebra $\nn$ and the relationship between this with the geodesic flow defined above in terms of the Gauss map $G: TN \to \nn$. 

\medspace

Let $(M, \la \,,\,\ra)$ be a Riemannian manifold. The so called {\em first integrals} of the geodesic flow are the functions $f:TM \to \RR$ which Poisson commute with the energy function. This gives
$$0= \{f, \En\}(v)=df_v(X_{\En})= X_{\En}(f)= \frac{d}{ds} f \circ \Gamma^s(v),$$
from which it is clear that first integrals are functions which are constant along geodesics. 

First integrals for the geodesic flow 
can be constructed from Killing vector fields. In fact for a Killing vector field $X^*$ on $M$, the function $f_{X^*}: TM \to \RR$ given by
 \begin{equation}\label{fkil}
 f_{X^*}(v)=\la X^*, v\ra
  \end{equation}
\noindent becomes a first integral of the geodesic flow. In fact if $\gamma(t)$ is a geodesic on $M$ and $\nabla$ denotes the Levi Civita connection, then
   \begin{align*}
   \frac{d}{dt}\langle X^*(\gamma(t)), \gamma'(t)\rangle & = \langle\nabla_{\gamma'(t)} X^*, \gamma'(t)\rangle + \left\langle X^*(\gamma(t)), \frac{D}{dt}\gamma'(t)\right\rangle \\
   & = \langle\nabla_{\gamma'(t)} X^*, \gamma'(t)\rangle = 0
   \end{align*}
   since $(\nabla X^*)_{\gamma(t)}$ is skew-symmetric. So $\langle(X^*(\gamma(t)), \gamma'(t)\rangle$ is constant, and this implies that 
$f_{X^*}(v) = \langle X(\pi(v)), v\rangle$, is constant along the integral curves of the geodesic flow.
 
 \begin{lem}
   Let $M$ be a Riemannian manifold and let $X^*$ be a Killing field on $M$.  Then $f_{X^*}$ is a first integral of the geodesic flow.
 \end{lem}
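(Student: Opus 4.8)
The plan is to use the characterization recorded just above, namely that the first integrals of the geodesic flow are exactly those functions $f\colon TM\to\RR$ which are constant along every geodesic (this is the content of $0=\{f,\En\}(v)=\frac{d}{ds}f\circ\Gamma^s(v)$). Since $f_{X^*}$ is already defined pointwise by $f_{X^*}(v)=\la X^*,v\ra$, evaluating it along the geodesic flow gives $f_{X^*}(\gamma'(t))=\la X^*(\gamma(t)),\gamma'(t)\ra$, so it suffices to show that this scalar function of $t$ has vanishing derivative for an arbitrary geodesic $\gamma$.

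First I would differentiate $t\mapsto\la X^*(\gamma(t)),\gamma'(t)\ra$, using that the Levi-Civita connection $\nabla$ is metric-compatible. The Leibniz rule splits the derivative into $\la\nabla_{\gamma'(t)}X^*,\gamma'(t)\ra+\la X^*(\gamma(t)),\tfrac{D}{dt}\gamma'(t)\ra$. The second summand is annihilated immediately by the geodesic equation $\tfrac{D}{dt}\gamma'(t)=0$, leaving only $\la\nabla_{\gamma'(t)}X^*,\gamma'(t)\ra$. This is precisely the two-term computation already displayed in the paragraph preceding the statement, so the formal proof amounts to organizing that computation.

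The single substantive input is the Killing hypothesis: the condition that the flow of $X^*$ acts by isometries, i.e.\ $\mathcal{L}_{X^*}\la\cdot,\cdot\ra=0$, is equivalent to the classical Killing equation, which says that the endomorphism $W\mapsto\nabla_W X^*$ is skew-symmetric with respect to the metric. In particular $\la\nabla_W X^*,W\ra=0$ for every tangent vector $W$, and taking $W=\gamma'(t)$ makes the remaining term vanish. Hence $f_{X^*}(\gamma'(t))$ is constant in $t$, so $f_{X^*}$ is constant along every geodesic and is therefore a first integral. I do not expect a genuine obstacle here: the only point that is not purely formal is the equivalence between the Killing property and the skew-symmetry of $\nabla X^*$, and this may be invoked as a standard fact.
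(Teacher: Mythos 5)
Your proposal is correct and follows essentially the same route as the paper: differentiate $\la X^*(\gamma(t)),\gamma'(t)\ra$ along a geodesic, kill one term with the geodesic equation and the other with the skew-symmetry of $\nabla X^*$ coming from the Killing condition. This is exactly the computation the paper displays in the paragraph preceding the lemma.
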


In the setting of the previous section, let $N$ denote a Lie group and let  $f: TN \to \mathbb R$ be a smooth function with gradient field $\grad_{(p,Y)}(f) = (U, V)$. In view of the formulas in Lemma \ref{lema1} the function $f$ is a first integral of the geodesic flow if and only if
\begin{equation}\label{integ}
\begin{array}{rcl}
  0 & = & \Omega_{(p,Y)}(X_{\En}, X_f) = \Omega_{(p,Y)}((Y, \ad^t(Y) Y), (V, -U + \ad^t(V) Y))\\
  & = & \la Y, -U + \ad^t(V) Y \ra - \la \ad^t(Y) Y, V \ra + \langle Y, [Y, V]\rangle \\
  & = &  \la Y, -U + \ad^t(V) Y \ra - \la \ad^t(Y) Y, V \ra  + \la \ad^t(Y) Y, V \ra \\
	& = & \langle Y, -U + \ad^t(V) Y \rangle.
  \end{array}
\end{equation}
This proves the following result. 
\begin{lem}\label{lema2} Let $(N,\la\,,\,\ra)$ denote a  Lie group equipped with a left-invariant metric. Then the smooth function $f: TN \to \RR$ with gradient $$\grad_{(p, Y)} f= (U,V)$$ is a first integral of the geodesic flow if and only if for all $(p,Y)\in TN$ it holds
\begin{equation}\label{eq:integrability-condition}
  \langle Y, U\rangle = \langle Y, [V,Y]\rangle.
\end{equation}
In particular if $N$ is $2$-step nilpotent, then $f$ is a first integral if and only if 
$$\la Y, U\rangle = \langle j(Y_{\zz}) V_{\vv}, Y_{\vv}\ra.$$
\end{lem}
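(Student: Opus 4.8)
The plan is to unwind the definition of a first integral directly. By the discussion preceding the lemma, $f$ is a first integral precisely when it Poisson commutes with the energy $\En$, which by (\ref{poisson}) amounts to requiring $\Omega_{(p,Y)}(X_{\En}, X_f) = 0$ at every point $(p, Y) \in TN$. So the whole argument reduces to computing this single pairing explicitly and reading off the resulting condition.

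First I would record the two Hamiltonian vector fields involved. From (\ref{difen}) the gradient of the energy is $\grad_{(p,Y)}\En = (0, Y)$, so Lemma \ref{lema1}(i) gives $X_{\En}(p, Y) = (Y, \ad^t(Y) Y)$; for a general $f$ with $\grad_{(p,Y)} f = (U, V)$ the same lemma gives $X_f(p, Y) = (V, \ad^t(V) Y - U)$. I would then substitute these into the explicit formula (\ref{symplectic-tan}) for $\Omega$, taking the first argument to be $X_{\En}$ and the second to be $X_f$.

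The heart of the computation is a cancellation. Expanding (\ref{symplectic-tan}) produces three terms; the term $-\la \ad^t(Y) Y, V\ra$ coming from the second slot of $X_{\En}$ cancels exactly against the bracket term $\la Y, [Y, V]\ra$, since by the very definition of the transpose one has $\la \ad^t(Y) Y, V\ra = \la Y, \ad(Y) V\ra = \la Y, [Y, V]\ra$. What remains is $\la Y, \ad^t(V) Y\ra - \la Y, U\ra$, and applying the transpose identity once more rewrites $\la Y, \ad^t(V) Y\ra$ as $\la Y, [V, Y]\ra$. Setting the total to zero yields exactly the stated condition $\la Y, U\ra = \la Y, [V, Y]\ra$.

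For the 2-step nilpotent specialization I would simply invoke the identity $\la Y, [V, W]\ra = \la j(Y_{\zz}) V_{\vv}, W_{\vv}\ra$ established earlier in the example section, applied with $W = Y$, which turns the right-hand side into $\la j(Y_{\zz}) V_{\vv}, Y_{\vv}\ra$. I do not anticipate any genuine obstacle here, as the argument is a direct substitution followed by simplification. The only point demanding care is the bookkeeping with $\ad^t$: keeping the sign conventions in (\ref{symplectic-tan}) consistent and recognizing precisely which two terms cancel, so I would verify each transpose manipulation against its defining relation before concluding.
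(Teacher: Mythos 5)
Your proposal is correct and follows exactly the paper's own argument: the paper establishes the lemma via the computation in (\ref{integ}), substituting $X_{\En}(p,Y)=(Y,\ad^t(Y)Y)$ and $X_f(p,Y)=(V,\ad^t(V)Y-U)$ into (\ref{symplectic-tan}) and observing precisely the cancellation $\la \ad^t(Y)Y,V\ra=\la Y,[Y,V]\ra$ that you identify. The transpose manipulations and the 2-step specialization via $\la Y,[V,Y]\ra=\la j(Y_{\zz})V_{\vv},Y_{\vv}\ra$ are likewise identical to the paper's.
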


We shall say that $f:TN \to \RR$ is {\em invariant} if $f(p,Y)=f(g \cdot p, Y)$ for all $g, p\in N$, $Y\in \nn$, that is, $f$ is invariant under the left-action of $N$ into $TN$.

We make  use of the Equation (\ref{eq:integrability-condition}) to find invariant first integrals on  Lie groups.

\begin{proposition} \label{propcuad}
 Let  $(N, \la\,,\,\ra)$ be a Lie group endowed with a left-invariant metric and let $TN$ denote its tangent bundle with the product metric.
 \begin{itemize}
 \item Let $f_{Z_0}: TN \to \mathbb R$ be defined by 
 $$f_{Z_0}(p, Y) = \langle Y, Z_0\rangle.$$
  Then $f_{Z_0}$ is a first integral of the geodesic flow for all $Z_0 \in \mathfrak z$. Moreover, the family $\{f_{Z_i}:Z_i\in \zz\}$ is a commutative family of first integrals.
  \item Let $A: \mathfrak n \to \mathfrak n$ be a symmetric endomorphism of $\mathfrak n$ and let 
$$
g_A(p, Y) = \frac{1}{2}\langle Y, AY\rangle.
$$
Then $g_A$ is a  first integral of the geodesic flow if and only if
$$
0 = \langle Y, [AY, Y]\rangle.
$$
  \end{itemize}
\end{proposition}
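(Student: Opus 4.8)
The plan is to reduce both statements to the first-integral criterion of Lemma \ref{lema2}: a smooth $f$ with gradient $\grad_{(p,Y)}f = (U,V)$ is a first integral precisely when $\la Y, U\ra = \la Y, [V,Y]\ra$ for every $(p,Y)$. Thus the only genuine computation is to read off the gradients of $f_{Z_0}$ and $g_A$, after which both claims follow almost immediately.

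First I would compute $\grad f_{Z_0}$ by differentiating along the curve $c(t) = (p\exp(tU'), tV'+Y)$, exactly as in the energy computation (\ref{difen}). Since $f_{Z_0}(c(t)) = \la tV'+Y, Z_0\ra$, one obtains $d(f_{Z_0})_{(p,Y)}(U',V') = \la V', Z_0\ra$; matching this against the pairing $\la U, U'\ra + \la V, V'\ra$ for all $(U',V')$ forces $\grad_{(p,Y)} f_{Z_0} = (0, Z_0)$. Substituting $U = 0$, $V = Z_0$ into Lemma \ref{lema2} reduces the integrability condition to $0 = \la Y, [Z_0, Y]\ra$, which holds identically because $Z_0 \in \zz$ and hence $[Z_0, Y] = 0$. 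For the commutativity of the family $\{f_{Z_i}\}$, I would apply the Poisson formula (\ref{poissonfg}) to two such functions: with the first components of both gradients vanishing and second components $Z_i, Z_j$, every term drops except $-\la Y, [Z_i, Z_j]\ra$, which is again zero since $\zz$ is abelian.

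For the quadratic function $g_A$, the same differentiation gives $\frac{d}{dt}\big|_0 \tfrac12\la tV'+Y, A(tV'+Y)\ra = \la AY, V'\ra$, where the symmetry of $A$ is used to merge the two cross terms; hence $\grad_{(p,Y)} g_A = (0, AY)$. Substituting $U = 0$, $V = AY$ into Lemma \ref{lema2} turns the first-integral condition directly into $0 = \la Y, [AY, Y]\ra$, which is exactly the stated criterion.

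All computations here are routine; the only points requiring care are the correct extraction of each gradient from its defining pairing (the vanishing of the first component $U$ is what annihilates the left-hand side of the criterion) and the timely use of the hypotheses: centrality of $Z_0$ together with the abelianness of $\zz$ for the first bullet, and symmetry of $A$ for the second. I do not anticipate any real obstacle beyond this bookkeeping.
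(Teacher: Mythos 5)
Your proposal is correct and follows essentially the same route as the paper: compute the gradients $\grad f_{Z_0}=(0,Z_0)$ and $\grad g_A=(0,AY)$, feed them into the criterion of Lemma \ref{lema2}, and check commutativity via the Poisson bracket formula (the paper phrases this as $\Omega(X_{f_{Z_0}},X_{f_{Z_1}})=\langle Y,[Z_0,Z_1]\rangle=0$, which is the same computation as your use of (\ref{poissonfg})). No gaps; your write-up merely spells out the curve differentiation that the paper dismisses as ``easy to see.''
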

\begin{proof}
  It is easy to see that $\grad_{(p, Y)}(f_{Z_0}) = (0, Z_0)$ and so the Hamiltonian vector field is $X_{f_{Z_0}} = (Z_0, 0)$. It follows from  Equality (\ref{eq:integrability-condition}) that $f_{Z_0}$ is a first integral of the geodesic flow. Moreover, 
$$
\Omega_{(p,Y)}(X_{f_{Z_0}}, X_{f_{Z_1}}) = \Omega_{(p,Y)}((Z_0, 0), (Z_1, 0)) = \langle Y, [Z_0, Z_1]\rangle = 0,
$$
which completes the proof of the first item.

We will deal now with quadratic first integrals of the geodesic flow. 

Let $A: \mathfrak n \to \mathfrak n$ denote a symmetric endomorphism of $\mathfrak n$ and let define
$$
g_A(p, Y) = \frac{1}{2}\langle Y, AY\rangle.
$$

An elementary calculation gives 
$$
dg_A|_{(p, Y)}(U, V) = \langle AY, V\rangle
$$ 
and hence 
$$
\grad_{(p, Y)}(g_A) = (0, AY).
$$ 
Then by Equation (\ref{eq:integrability-condition}), we have that $g_A$ is a first integral of the geodesic flow if and only if
$$
0 = \langle Y, [AY, Y]\rangle. \qedhere
$$
\end{proof}

\begin{remark} The last equality says that for $A$ symmetric on  a $2$-step nilpotent Lie algebra, the Poisson commutativity of $g_A$ with the energy function depends essentially on the restriction of $A$ to $\vv$.
\end{remark}

\begin{theorem} \label{teo1}  Let  $(N, \la\,,\,\ra)$ denote a $2$-step nilpotent Lie group endowed with a left-invariant metric and let $TN$ denote its tangent bundle with the product metric. Let $\nn$ be the Lie algebra of $N$. Let $A:\vv \to \vv$ denote a symmetric map with respect to $\la \,,\,\ra_{\vv}$ and let $g_A: TN \to \RR$ be given by
$$
g_A(p, Y) = \frac{1}{2}\langle Y, AY\rangle,
$$
where $A Z=0$ for all $Z\in \zz$. 
Then
\begin{enumerate}[(i)]
\item $g_A$ is a first integral of the geodesic flow if and only if for any basis $\{Z_1, \hdots , Z_m\}$ of $\zz$ one has
$$[J(Z_i), A]=0 \qquad \quad \mbox{ for all } i=1,\hdots , m.$$
\item Let  $g_A$ and $g_B$ be a pair of first integrals, then they Poisson commute  if and only if $$j(Z_i)AB = j(Z_i)BA \quad \mbox{ for all } i=1,\hdots m.$$
In particular if $\nn$ is non-singular, then 
$$\{g_A, g_B\} = 0\qquad \mbox{ if and only if } \qquad [A,B]=0.$$ 
\end{enumerate}
\end{theorem}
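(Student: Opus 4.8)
The plan is to reduce both statements to the vanishing of quadratic forms on $\vv$ and then read off the operator conditions from the symmetric parts of the relevant endomorphisms. Throughout, $j$ denotes the map defined by (\ref{br}), which is what the statement writes $J(Z_i)$; the two main ingredients are that $A$ (and $B$) are symmetric while each $j(Z)$ is skew-symmetric.

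For part (i), I would start from the criterion in Proposition \ref{propcuad}: $g_A$ is a first integral if and only if $\langle Y, [AY, Y]\rangle = 0$ for every $Y \in \nn$. Writing $Y = Y_\vv + Y_\zz$ and using that $A$ preserves $\vv$ and annihilates $\zz$, together with $[\vv,\vv] \subseteq \zz$ and the centrality of $\zz$, I would collapse this to $\langle Y_\zz, [AY_\vv, Y_\vv]\rangle$, and then invoke the defining relation (\ref{br}) to rewrite it as
$$
\langle Y, [AY, Y]\rangle = \langle j(Y_\zz)AY_\vv, Y_\vv\rangle.
$$
For fixed $Y_\zz$ this is the quadratic form attached to the endomorphism $j(Y_\zz)A$ of $\vv$, so it vanishes identically in $Y_\vv$ exactly when the symmetric part of $j(Y_\zz)A$ is zero. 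Since $A$ is symmetric and $j(Y_\zz)$ is skew, that symmetric part equals $\tfrac12[j(Y_\zz), A]$. Finally, because $j$ is linear in its argument, requiring $[j(Y_\zz),A]=0$ for all $Y_\zz \in \zz$ is equivalent to $[j(Z_i),A]=0$ on a basis, which is statement (i).

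For part (ii), I would compute the bracket directly from (\ref{poisson2}). Since $\grad g_A = (0, AY)$ and $\grad g_B = (0, BY)$ (as recorded in the proof of Proposition \ref{propcuad}), the first two terms of (\ref{poisson2}) drop out, and using $(AY)_\vv = AY_\vv$, $(BY)_\vv = BY_\vv$ I obtain
$$
\{g_A, g_B\}(p, Y) = \langle j(Y_\zz)BY_\vv, AY_\vv\rangle.
$$
Moving $A$ across by symmetry turns this into the quadratic form of $A\,j(Y_\zz)B$, and now I would feed in the hypothesis that both $g_A$ and $g_B$ are first integrals, so that part (i) supplies $j(Y_\zz)A = Aj(Y_\zz)$ and $j(Y_\zz)B = Bj(Y_\zz)$. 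These commutation relations let me rewrite the symmetric part of $A\,j(Y_\zz)B$ as $\tfrac12 j(Y_\zz)(AB-BA) = \tfrac12\,j(Y_\zz)[A,B]$; the same relations also show this operator is genuinely symmetric, since $j(Y_\zz)$ then commutes with $[A,B]$. Hence the bracket vanishes identically if and only if $j(Y_\zz)[A,B]=0$ for all $Y_\zz$, that is $j(Z_i)AB = j(Z_i)BA$ for all $i$. For the non-singular case, by the characterization recalled in the 2-step subsection $j(Z)$ is invertible for every nonzero $Z \in \zz$; thus $j(Z_1)[A,B]=0$ for the basis vector $Z_1$ forces $[A,B]=0$, while the converse is immediate.

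The main obstacle, and the place where care is needed, is the bookkeeping of transposes in the two symmetric-part computations: one must use precisely that $A$ and $B$ are symmetric while each $j(Z)$ is skew, and in part (ii) one must insert the commutation relations from (i) in the right order to collapse $A\,j(Y_\zz)B$ and $B\,j(Y_\zz)A$ onto the single operator $j(Y_\zz)[A,B]$. A secondary point to keep straight is that a quadratic form $\langle SW, W\rangle$ detects only the symmetric part of $S$, so each ``vanishing for all $Y_\vv$'' statement must be translated into the vanishing of a symmetric operator rather than of the raw product itself.
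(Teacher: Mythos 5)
Your proof is correct and follows essentially the same route as the paper: both parts are reduced to the vanishing of the quadratic forms attached to $j(Y_\zz)A$ and (up to transpose/sign) $B\,j(Y_\zz)A$, and the operator identities are extracted using that $A,B$ are symmetric while each $j(Z)$ is skew, with the first-integral commutation relations from (i) inserted to pass to $j(Z_i)AB=j(Z_i)BA$ and, in the non-singular case, to $[A,B]=0$ via invertibility of $j(Z)$. The only cosmetic difference is that you phrase the key step as ``the symmetric part of the operator must vanish'' where the paper polarizes with $Y_\vv=V+V'$; these are the same argument.
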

\begin{proof} (i) Proposition \ref{propcuad} says that $g_A$ is a first integral if and only if for $Y\in \nn$ one has 
$0=\la j(Y_{\zz})A Y_{\vv}, Y_{\vv}\ra$ which for a basis $\{Z_i\}$ of $\zz$ is equivalent to
\begin{equation}\label{aux}0=\la j(Z_i)A Y_{\vv}, Y_{\vv}\ra.
\end{equation}
In fact for the proof of $(\Rightarrow)$ take $Y= V + Z_i$ with $V\in \vv$ to get (\ref{aux}). The converse is clear since $j(Y_{\zz})=\sum_{i=1}^m a_i j(Z_i)$ for $a_i\in \RR$ and  $\{Z_1, \hdots, Z_m\}$ a basis of $\zz$. 

Now take $Y_{\zz}= V + V'\in \vv$ and write
$$0=\la j(Z_i) A(V+V'), V + V'\ra = \la j(Z_i) AV, V'\ra - \la A j(Z_i) V, V'\ra$$
which implies that $j(Z_i) A = A j(Z_i)$ for all $i=1, \hdots m$.

(ii) Assume that $g_A, g_B$ is a pair of first integrals of the geodesic flow (so that they satisfy (i) above). Then Lemma \ref{lema1} says that 
$$\begin{array}{rcl}
0 & = & \{g_A,g_B\} = \la Y, [AY,BY]\ra \\
& = & \la j(Y_{\zz}) A Y_v, B Y_{\vv}\ra \\
& = & \la B j(Y_{\zz}) A Y_{\vv}, Y_{\vv} \ra
\end{array}
$$
Firstly as in the proof of (i) the last equality is equivalent to $0= \la B j(Z_i) A Y_{\vv}, Y_{\vv}\ra$ for all $i=1,\hdots m$. Also as above $Bj(Z_i) A$ must be skew symmetric so that
$$(Bj(Z_i) A)^t= - A j(Z_i) B = - B j(Z_i) A \quad \mbox{ for all } i =1, \hdots , m.$$
Since the functions  $g_A, g_B$ are first integrals one has
$$j(Z_i) AB = j(Z_i) BA$$
which finally implies the statement. The last assertion follows from the last equality. 
\end{proof}

Let $N$ be a Lie group of dimension $n$. We shall say that the geodesic flow is completely {\em integrable} if there exists $n$ smooth functions $f_1, f_2, \hdots, f_n:TN \to \RR$ such that
\begin{itemize}
\item the gradients of $f_1, f_2, \hdots, f_n$ are linearly independent on an open dense subset of $TN$;
\item $0=\{f_i, f_k\}=\{f_i, E\}$ for all $i,k=1, \hdots, n$.
\end{itemize}

We now consider a sufficient non-integrability criterion by Butler \cite{Bu2}:

\begin{defn} Let $\nn$ be a 2-step nilpotent Lie algebra.
\begin{itemize}
\item[(i)]
 For $\lambda \in \nn^*$, let  $\nn_{\lambda}:= \{X\in \nn / \ad^*(X) \lambda=0\}$.
\item[(ii)]  A $\lambda \in \nn^*$  is called regular if $\nn_{\lambda}$  has minimal dimension.
\item[(iii)] $\nn$ is called non-integrable if there exists a dense open subset $\mathcal W$ of $\nn^*\times \nn^* $ such that for
each $(\lambda, \mu) \in \mathcal W$, both $\lambda$ and $\mu$  are regular and $[\nn_{\lambda}, \nn_{\mu}]$ has positive dimension.
\end{itemize}
\end{defn}

In view of the metric on $\nn$ any element $\lambda\in \nn^*$ can be realized in the form $\lambda=\la V+Z, \cdot\ra$ for unique $V\in \vv$ and $Z\in \zz$. By identifying  $\lambda\in\nn^* \longleftrightarrow V+Z\in \nn$  we
 get
$$
\begin{array}{rcl}
\nn_{V+Z} & = & \{X\in \nn \, : \, \la V + Z, \ad(X) \cdot \ra=0\} \\
 & = & \{X\in \nn \, : \, \la V + Z, \ad(X)(V' + Z') \ra=0\, \mbox{ for all } V'+Z'\in\nn\}\\
& = & \{X\in \nn \, : \, \la Z, [X, V'] \ra=0\, \mbox{ for all } V'\in\vv\},
\end{array}
$$
from which it is clear that $\zz \subseteq \nn_{\lambda}$ for all $\lambda \in \nn^*$. Moreover if we denote $X=\tilde{V}+\tilde{Z}$ then
$$
\tilde{V}+\tilde{Z}\in\nn_{V+Z}\Leftrightarrow \la j(Z) \tilde{V}, \cdot \ra=0  \Leftrightarrow \la \tilde{V}, j(Z) V' \ra=0\, \mbox{ for all } V' \in \vv.
$$

Hence if $\nn$ is non-singular then $\tilde{V}=0$ since 
$\Im j(Z)=\vv$. In this situation $\nn_{V+Z}$ has minimal dimension for  $Z\neq 0$ and $[\nn_{\lambda}, \nn_{\mu}]=0$. One gets the following result. 

\begin{lem} Let $\nn$ be a non-singular 2-step nilpotent Lie algebra, then $\nn$ cannot be a non-integrable Lie algebra.
\end{lem}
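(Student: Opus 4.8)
The plan is to exploit the explicit description of $\nn_\lambda$ obtained just before the statement and to show that, under non-singularity, every \emph{regular} covector has isotropy algebra equal to the center $\zz$; this forces the bracket $[\nn_\lambda,\nn_\mu]$ to vanish on a large set, so that the defining condition for non-integrability can never be satisfied.

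First I would recall the identification $\lambda \leftrightarrow V+Z \in \nn$ with $V \in \vv$, $Z\in\zz$, together with the characterization already displayed: writing $X=\tilde V + \tilde Z$, one has $\tilde V + \tilde Z \in \nn_{V+Z}$ if and only if $\la \tilde V, j(Z)V'\ra = 0$ for all $V'\in\vv$, i.e.\ $\tilde V \perp \Im j(Z)$, while $\tilde Z$ is unconstrained since $\zz \subseteq \nn_\lambda$ always holds. Assuming $\nn$ non-singular and $Z \neq 0$, the equivalent formulations of non-singularity give $\Im j(Z) = \vv$, which forces $\tilde V = 0$. Hence $\nn_{V+Z} = \zz$ whenever $Z \neq 0$, so the isotropy dimension drops to $\dim\zz$ exactly on the set where the central component is nonzero.

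Next I would pin down the regular covectors. The set $\mathcal U := \{\lambda : Z \neq 0\} = \{\lambda : \lambda|_\zz \neq 0\}$ is the complement in $\nn^*$ of a proper linear subspace, hence open and dense, and on it $\dim\nn_\lambda = \dim\zz$ is constant and minimal (at $Z=0$ one has $j(0)=0$, so the condition is vacuous and $\nn_\lambda = \nn$, while $\dim\zz < \dim\nn$ for a genuine $2$-step algebra). Thus the regular covectors are precisely those in $\mathcal U$, each satisfying $\nn_\lambda = \zz$. Consequently, for any regular pair $(\lambda,\mu) \in \mathcal U \times \mathcal U$ one gets $[\nn_\lambda,\nn_\mu] = [\zz,\zz] = 0$, since $\zz$ is the center of $\nn$.

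Finally I would argue by contradiction. Were $\nn$ non-integrable, there would exist a dense open $\mathcal W \subseteq \nn^*\times\nn^*$ on which every pair is regular and $[\nn_\lambda,\nn_\mu]$ has positive dimension. Since $\mathcal U \times \mathcal U$ is open and dense, the intersection $\mathcal W \cap (\mathcal U\times\mathcal U)$ is nonempty; any $(\lambda,\mu)$ in it is a regular pair with $[\nn_\lambda,\nn_\mu]=0$, contradicting positivity of the dimension. Hence no such $\mathcal W$ exists, and $\nn$ cannot be a non-integrable Lie algebra. There is essentially no hard step here: the whole content is the identity $\nn_\lambda = \zz$ for regular $\lambda$, which is already in hand; the only care required is the routine topological observation that a dense set meets every nonempty open set.
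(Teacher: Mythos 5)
Your proposal is correct and follows essentially the same route as the paper: the identification $\lambda\leftrightarrow V+Z$, the observation that non-singularity forces $\Im j(Z)=\vv$ and hence $\nn_\lambda=\zz$ for $Z\neq 0$, and the conclusion $[\nn_\lambda,\nn_\mu]=[\zz,\zz]=0$ for regular pairs. You merely spell out the density/regularity bookkeeping that the paper leaves implicit.
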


Thus non-singular Lie algebras are candidate to have a completely integrable geodesic flow in view of the next result.

\smallskip

\begin{utheorem}[\cite{Bu2}, Theorem 1.3] Let $\nn$ be a non-integrable $2$-step nilpotent Lie algebra with  associated simply connected Lie group $N$. Assume that there exists a discrete cocompact subgroup $\Lambda$ of $N$. Then for any such $\Lambda$  and any left-invariant metric $g$ on $N$, the geodesic flow of $(\Lambda \backslash N, g)$ is not completely integrable.
\end{utheorem}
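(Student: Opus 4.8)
The statement is a non-integrability result, so the plan is to argue by contradiction: I assume the geodesic flow of $(\Lambda\backslash N, g)$ is completely integrable and extract from this a commutation property of coadjoint stabilizers that contradicts the hypothesis that $\nn$ is non-integrable. Throughout I pass to the cotangent side, identifying $T^*N\simeq N\times\nn^*$ by left translation as in the Remark of Section 2, so that the covering $N\to\Lambda\backslash N$ induces $T^*(\Lambda\backslash N)\simeq\Lambda\backslash T^*N$.

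First I would lift the data. A complete set of integrals $F_1,\dots,F_n$ on $T^*(\Lambda\backslash N)$, with $n=\dim N$, pulls back to $\Lambda$-invariant, pairwise Poisson-commuting functions $\tilde f_1,\dots,\tilde f_n$ on $T^*N$, functionally independent on a dense open $\Lambda$-invariant set and commuting with the energy $\En$; commutation and independence lift because the covering is a local symplectomorphism. Their Hamiltonian fields span a Lagrangian distribution tangent to the Liouville foliation $\mathcal F$, and since $\{\tilde f_i,\En\}=0$ the geodesic field $X_{\En}$ is tangent to $\mathcal F$. On the other hand, because $g$ is left-invariant the left $N$-action is by isometries and carries the conserved momentum map $J\colon T^*N\to\nn^*$, $J(h,\lambda)=\Ad^*_h\lambda$, whose linear components $J_X$ are exactly the Killing integrals $f_{X^*}$ of (\ref{fkil}); they satisfy $\{J_X,\En\}=0$ and $\{J_X,J_Y\}=J_{[X,Y]}$. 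Thus the geodesic flow on $T^*N$ is integrable in the non-commutative sense through $J$, and the coadjoint isotropy $\nn_\lambda=\{X:\ad^*(X)\lambda=0\}$ measures the dimension of the isotropic tori carrying the generic geodesics.

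The core of the argument is to confront the commutative Liouville foliation $\mathcal F$ with this coadjoint data. Using that the geodesic flow commutes with the $N$-action and that, by the Jacobi identity, each bracket $\{\tilde f_i,J_X\}$ is again a first integral of the geodesic flow, I would analyse the position of the momentum directions $X_{J_X}$ relative to the Lagrangian tangent spaces of $\mathcal F$ at a generic point $(h,\lambda)$ with $\lambda$ regular. The goal is to show that the existence of the full commuting family forces, for generic pairs of conserved covectors $\lambda,\mu$ occurring on nearby leaves, the relation $[\nn_\lambda,\nn_\mu]=0$: the commuting integrals must simultaneously polarize all the momentum directions, and the Lie-theoretic obstruction to doing so is precisely the bracket $[\nn_\lambda,\nn_\mu]$. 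This contradicts the hypothesis that there is a dense open $\mathcal W\subseteq\nn^*\times\nn^*$ on which both covectors are regular and $[\nn_\lambda,\nn_\mu]$ has positive dimension; the preliminary Lemma (non-singular algebras are never non-integrable) guarantees that the relevant singular directions, i.e.\ the degeneracies of the maps $j(Z)$, are genuinely present.

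The main obstacle is exactly this last rigidity step, namely converting the \emph{analytic} datum ``$n$ smooth commuting independent integrals'' into the \emph{algebraic} conclusion ``$[\nn_\lambda,\nn_\mu]=0$ generically.'' Two points make it delicate, and both are where compactness is essential. First, a first integral of the geodesic flow need only be constant along geodesics, not along entire Liouville tori, so the flow may be resonant on its tori; one must control the generic rank of $\mathcal F$ and exclude degenerate resonances in order to pin down the isotropic dimension in terms of $\nn_\lambda$. Second, the $\tilde f_i$ are only $\Lambda$-invariant, not $N$-invariant, so one cannot average over the non-compact group $N$; it is precisely the $\Lambda$-invariance together with compactness of $\Lambda\backslash N$ that prevents the integrals from unwinding in the base direction. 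On $N$ itself the same momentum map already yields genuine (non-commutative) integrability, so any valid argument must use the cocompact lattice in an irreducible way. Making these two points rigorous is the crux; the remaining steps---the lift, the Jacobi-identity closure of the integral algebra, and the generic-rank bookkeeping---are routine by comparison.
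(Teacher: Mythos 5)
First, a point of reference: the paper does not prove this statement at all --- it is imported verbatim as Theorem~1.3 of \cite{Bu2} and used as a black box (the surrounding text only checks that non-singular algebras, such as $\hh_n$, never satisfy Butler's non-integrability condition). So there is no in-paper proof to compare against, and your attempt has to be judged on its own terms against Butler's actual argument.

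On those terms there is a genuine gap, and you have located it yourself: the entire content of the theorem is the implication ``a complete commuting independent family on $T(\Lambda\backslash N)$ forces $[\nn_\lambda,\nn_\mu]=0$ for generic $(\lambda,\mu)$,'' and your proposal states this as ``the goal'' and then concedes that making it rigorous ``is the crux.'' Everything you do carry out --- lifting the integrals through the covering, observing that $\{J_X,\En\}=0$ and $\{J_X,J_Y\}=J_{[X,Y]}$, noting that $\{\tilde f_i,J_X\}$ is again a first integral --- is true but does not engage the lattice: all of it holds on $T^*N$ itself, where the geodesic flow \emph{is} integrable (indeed the paper exhibits analytic commuting integrals on $T\Heis_n$), so no contradiction can come from these ingredients alone. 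The place where compactness must enter is precisely the step you leave open, and Butler's route through it is quite different from the ``simultaneous polarization of momentum directions'' you gesture at: he exploits recurrence on the compact quotient to attach to generic geodesics an asymptotic (non-commutative) rotation vector with values in the nilpotent group, shows that orbits lying on the invariant tori of a Liouville-integrable system have commuting asymptotic cycles, and derives the commutativity of $[\nn_\lambda,\nn_\mu]$ from that. Without either reproducing that dynamical argument or supplying a genuinely new one for the rigidity step, the proposal is a correctly oriented plan rather than a proof. Two smaller cautions if you pursue it: the components $J_X$ of the momentum map do not descend to $\Lambda\backslash N$ (only $\Lambda$-invariant combinations do, which is exactly why the paper must replace $F_k$ by the non-analytic $\tilde F_k$ on the quotient), so the interaction between the lifted family $\tilde f_i$ and the $J_X$ is subtler than stated; and the algebra generated by the $\tilde f_i$ under bracketing with the $J_X$ need not stay functionally dependent on the $\tilde f_i$, which is another way of saying the rigidity step is not formal.
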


An example of a non-singular Lie algebra is the Heisenberg Lie algebra $\hh_n$ corresponding to the Heisenberg Lie group $\Heis_n$. We shall see in the next section that the geodesic flow on any quotient $\Lambda \backslash \Heis_n$ is completeley integrable for any induced left-invariant metric. 

\begin{remark} Topological obstructions to integrability on two-dimensional surfaces were given in \cite{Kol, Koz}. In any dimension on non-simply-connected manifolds by Taimanov \cite{Ta1}.
\end{remark}

\section{On the first integrals of the geodesic flow on the Heisenberg manifolds}

A Riemannian {\em Heisenberg manifold} is a compact manifold given as a quotient $\Lambda \backslash \Heis_n$ where $\Heis_n$ denotes the Heisenberg Lie group of dimension $2n+1$ and $\Lambda < \Heis_n$ is a discrete cocompact subgroup. This notation coincides with that one in \cite{GW}. 

The  Heisenberg Lie group $\Heis_n$  is the simply connected Lie group constructed over the smooth space  $\RR^{2n+1}$ together with the multiplication map: 
$$
(v, z)(v', z') = \left(v+v', z+z'- \frac{1}{2}v^{\tau} J v'\right) \qquad v, v' \in \RR^{2n},\,  z\in \RR, 
$$
where $J$ is the real linear map representing the multiplication by $i$ once we identify $\RR^{2n}$ with 
$\CC^n$. It is clear that  a basis of left-invariant vector fields is given for $k=1, \hdots, n$ by
$$Z_1(p)= \partial_z, \qquad X_{2k-1}(p)=\partial_{x_{2k-1}}-\frac12 p_{2k}\partial_z,\qquad X_{2k}(p)=\partial_{x_{2k}}+\frac12 p_{2k-1}\partial_z,$$
where $x_i$ are the canonical coordinates on $\RR^{2n}$ and with $\partial_y$ we denote the partial derivative 
$\frac{\partial}{\partial y}$. So any left-invariant vector field $U$  on $\Heis_n$ has the form
$$U(p)= \sum_{i=1}^{2n} a_i X_i + c Z_1= \sum_{i=1}^{2n} a_{i} \partial_{x_i} + 
\left(c+\frac12\sum_{i=1}^ n(-a_{2i-1}p_{2i}+a_{2i}p_{2i-1})\right)\partial_z.$$

Note that $\sum_{i=1}^ n(-a_{2i-1}p_{2i}+a_{2i}p_{2i-1})=\omega(P, A)$ where $\omega$ is the canonical symplectic form of $\RR^{2n}$ and $P=(p_1, p_2, \hdots, p_{2n}),\, A=(a_1, a_2, \hdots , a_{2n})$. 

Let $\la \,,\,\ra$ be the inner product on $\hh_n$ which turns the set 
$\{X_1, X_2, \ldots, X_{2n}, Z_1\}$ into  an orthonormal basis. In canonical coordinates of $\RR^{2n+1}$ we get the following metric:
$$dz^2 + \sum_{k=1}^{2n} [(-1)^{k+1} y_k\, dz dx_k+  (1+\frac14 y_k^2) dx_k^2]+\frac12 \sum_{k\neq s, 1}^{2n} (-1)^{k+s} y_k y_s dx_k dx_s$$

where $y_t = \left\{ \begin{array}{ll}
x_{2l} & \mbox{ for } t=2l-1\\
x_{2l-1} & \mbox{ for } t=2l.
\end{array}
\right.$

We also denote by 
$\langle\cdot, \cdot\rangle$ the left-invariant metric on $\Heis_n$. Then the restriction of $\la \cdot, \cdot \ra$ to the Lie algebra $\hh_n$ gives a decomposition
$$
\hh_n = \mathfrak v \oplus \mathfrak z \qquad \text{ as orthogonal sum of vector subspaces},
$$
where $\mathfrak v$ is the subspace spanned by $X_1, X_2, \ldots, X_{2n}$ and $\mathfrak z = \mathbb RZ_1$ is the 
center of $\hh_n$. 
For a generic left-invariant vector field $U$ denote by $U_{\vv}$ and $U_{\zz}$ the respective projections onto 
$\vv$ and $\zz$. One can see that the integral curve of $U$ at the point $p=(p_v, p_z)$ is 
given by $$\gamma_p(s)=(p_v+sU_{\vv}, p_z+ sU_{\zz}-\frac12 s p_v^{\tau}J U_{\vv})$$ via natural identifications 
at the corresponding vector spaces.

It is the clear that the exponential map $\exp:\hh_n \to \Heis_n$ is given by
$$\exp(U) = U$$ via identifications: at the left-side $U$ is given in terms of a left-invariant basis, while at 
the right side the vector $U\in \RR^{2n+1}$ is given in canonical coordinates.

Under the Lie bracket on $\mathfrak X(\Heis_n)$ the set of left-invariant vector fields give rise to the Lie algebra  $\hh_n$ called  
the Heisenberg Lie algebra whose  basis 
$\{X_1, X_2, \ldots, X_{2n}, Z_1\}$ satisfies the non-zero Lie bracket relations
$$
[X_{2i - 1}, X_{2i}]= Z_1, \qquad i = 1, \ldots, n
$$
 so that 
 for $Z \in \mathfrak z$, the skew-symmetric endomorphism $j(Z) \in \sso(\mathfrak v)$ as in (\ref{br}) takes the form
$$
j(Z) = \langle Z, Z_1\rangle J $$ where $J:=j(Z_1)$ has the following matrix 
$$
\begin{pmatrix}
  0 & -1 & & & \\
  1 & 0 & & & \\
  & & \ddots & & \\
  & & & 0 & -1 \\
  & & & 1 & 0
\end{pmatrix}
$$
in the basis $\{X_1, X_2, \ldots, X_{2n}\}$ of $\mathfrak v$. 

Since the dimension of the center of $\hh_n$ satisfies $\dim  \zz(\hh_n)=1$, we can only define one independent linear first integral for the geodesic flow:
$$
f_{Z_1}(p, Y) = \langle Y, Z_1\rangle.
$$
As we said before, we have that
$$
\grad_{(p, Y)}f_{Z_1} = (0, Z_1), \qquad X_{f_{Z_1}}(p, Y) = (Z_1, 0).
$$

As known, the isometry group of a 
simply connected nilmanifold $M$ is the semidirect product $Iso(M)=K\ltimes N$ where 
$N$ is the nilradical of the isometry group which acts simply and transitive on $M$. Naturally $M$ can be 
identified with $N$ so that the homogeneous metric on $M$ turns into a left-invariant metric on $N$. Moreover the isotropy 
subgroup $K$ consists of isometric automorphisms. See \cite{Wo} for more details. 

Let $K$ be the subgroup of isometric automorphisms of the Heisenberg Lie group $\Heis_n$, its  Lie algebra $\mathfrak k$ is given by
\begin{equation}
  \label{isomh}
  \mathfrak k =\{ B\in \mathfrak{so}(\vv, \la\,,\,\ra_{\vv}) \quad \mbox{ such that } \,\, [J, B]=0\},
 \end{equation}
see for instance \cite{La}. 

It is clear that the action of $\Heis_n$ on itself by isometries is given by the translations on the left.

\medspace

Let us search for the quadratic homogeneous polynomials which are first integrals of the geodesic flow.  For a symmetric map $A:\vv \to \vv$ the quadratic polynomial given by
 $$g_A(p, Y)=\la AY, Y\ra$$
 is a first integral of the geodesic flow on the Heisenberg Lie group $\Heis_n$ if 
 $$[J, A]=0 \qquad \qquad \mbox{ by Theorem \ref{teo1}}.$$
 
 Take $A:\vv \to \vv$ a symmetric map such that $[J,A]=0$ then the map given as $B:=JA\in \mathfrak{so}(\vv \la\,,\,\ra_{\vv})$ 
 also satisfies
 $$[J,B]=0$$
 so that $B$ belongs to the isometry algebra of $\Heis_n$. 
 In fact, as explained above $Iso(\Heis_n)=K\ltimes \Heis_n$ where $K$ denotes the isotropy subgroup consisting of isometric 
 automorphisms. Its Lie algebra is $\mathfrak{iso}(\Heis_n)=\mathfrak k \oplus \hh_n$ where  $\mathfrak k$ is the Lie algebra of $K$ and it 
 consists of skew-symmetric derivations, see (\ref{isomh}).

 Conversely take $B\in \mathfrak k$ and define a symmetric map $A:\vv \to \vv$ by $A:=JB$. Then $g_A$ is 
 a first integral  of the geodesic flow. 
 
 \begin{theorem} There is a bijection $\psi$ between the set of quadratic first integrals of the geodesic flow and 
 the Lie subalgebra of skew-symmetric derivations of $\hh_n$ given by
  \begin{equation}\label{simde} A \quad \longrightarrow \quad \psi(A) : = JA
	\end{equation}
  so that $\{g_{A_1}, g_{A_2}\}=0$ if and only if $[\psi(A_1), \psi(A_2)]=0$ in $\mathfrak{so}(\vv, \la\,,\,\ra_{\vv})$.
 \end{theorem}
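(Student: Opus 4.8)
The plan is to treat the statement in two stages: first establish that $\psi(A)=JA$ is a well-defined linear bijection from the space of quadratic first integrals onto the algebra $\mathfrak k$ of skew-symmetric derivations, and then verify that it intertwines Poisson-commutativity with the commutator in $\sso(\vv,\la\,,\,\ra_{\vv})$. By Theorem~\ref{teo1}(i), specialized to $\Heis_n$ where the only map to consider is $j(Z_1)=J$, the quadratic first integrals are exactly the functions $g_A$ attached to symmetric maps $A\colon\vv\to\vv$ (extended by $AZ=0$ on $\zz$) satisfying $[J,A]=0$. Thus the domain of $\psi$ is the real vector space $\mathcal S:=\{A\colon\vv\to\vv : A^t=A,\ [J,A]=0\}$, and I would phrase the whole argument in terms of $\mathcal S$.

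For the first stage I would check directly that $\psi$ maps $\mathcal S$ into $\mathfrak k$. Using $J^t=-J$, $A^t=A$ and the commutation $JA=AJ$, one gets $(JA)^t=A^tJ^t=-AJ=-JA$, so $JA$ is skew-symmetric, while $[J,JA]=J^2A-J(AJ)=J^2A-J^2A=0$, so $JA\in\mathfrak k$. Since $J^2=-\id$ on $\vv$, multiplication by $J$ is invertible on $\End(\vv)$, which gives injectivity of $\psi$; surjectivity follows by running the same computation in reverse, i.e.\ checking that $B\mapsto -JB$ sends $\mathfrak k$ back into $\mathcal S$. Hence $\psi$ is a linear isomorphism $\mathcal S\to\mathfrak k$.

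The second stage is where the real content lies. Because $\hh_n$ is non-singular, Theorem~\ref{teo1}(ii) furnishes, for any two quadratic first integrals, the clean criterion $\{g_{A_1},g_{A_2}\}=0\iff[A_1,A_2]=0$. It then remains to compare $[A_1,A_2]$ with $[\psi(A_1),\psi(A_2)]=[JA_1,JA_2]$. The key identity, which I expect to be the crux of the proof, exploits that $A_1,A_2\in\mathcal S$ commute with $J$ together with $J^2=-\id$:
\begin{equation*}
[JA_1,JA_2]=JA_1JA_2-JA_2JA_1=J^2A_1A_2-J^2A_2A_1=-[A_1,A_2].
\end{equation*}
Therefore $[\psi(A_1),\psi(A_2)]=-[A_1,A_2]$, so the two brackets vanish simultaneously, and combining this with the criterion above yields $\{g_{A_1},g_{A_2}\}=0\iff[\psi(A_1),\psi(A_2)]=0$, as claimed.

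The only genuine subtlety to watch is that the commutation relation $JA_i=A_iJ$ used throughout is available precisely because each $A_i$ defines a first integral (Theorem~\ref{teo1}(i)); it is exactly this hypothesis that both places $\psi(A_i)$ in $\mathfrak k$ and makes the displayed identity $[JA_1,JA_2]=-[A_1,A_2]$ hold. Every other step reduces to elementary linear algebra, so I would not expect any further obstruction.
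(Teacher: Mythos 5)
Your proposal is correct and follows essentially the same route as the paper: characterize the quadratic first integrals via Theorem~\ref{teo1}(i) as the symmetric $A$ with $[J,A]=0$, check by elementary linear algebra that $JA$ is then a skew-symmetric derivation (with inverse given by multiplication by $J$ up to sign), and reduce the involution statement to Theorem~\ref{teo1}(ii) for the non-singular algebra $\hh_n$ together with the identity $[JA_1,JA_2]=-[A_1,A_2]$. Your explicit sign bookkeeping for the inverse map ($B\mapsto -JB$ rather than the paper's $B\mapsto JB$) is a minor tidying of the paper's argument and changes nothing of substance.
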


 \begin{remark}
  Note that $\psi$ is not a Lie algebra morphism. 
 \end{remark}

\begin{remark} The Gauss map $G: T \Heis_n \to \hh_n$ is an anti Poisson map. The metric $\la\,,\,\ra$  and the Lie bracket on $\hh_n$ define a Poisson structure on the Heisenberg Lie algebra $\hh_n$. For the energy function on $\hh_n$ given by $\bar{E}(v)=\frac12 \la v, v \ra$ one also has its Hamiltonian vector field called the geodesic vector field, which corresponds in a natural way to the geodesic flow means of the Gauss map $G: T \Heis_n \to \hh_n$. 
In fact if $f$ is an invariant function on $T \Heis_n$ which corresponds to a function $\bar{f}:\hh_n \to \RR$ then $G(X_f(p,v))=-X_{\bar{f}}(v)$ where $X_f$ denotes the Hamiltonian vector field for $f$ while $X_{\bar{f}}$ for $\bar{f}$. If $\Gamma^t$ denotes the geodesic flow for  $X_E$ and $\bar{\Gamma}^t$ the flow of $X_{\bar{E}}$ then $G\circ \Gamma^{-t}=\bar{\Gamma}^t \circ G$ for all $t\in \RR$.
See \cite{Eb2} for more details. A formula for geodesics on the Heisenberg Lie group can be found in \cite{Eb}. 

The coadjoint orbits on the Heisenberg Lie algebra $\hh_n$ were described in \cite{Ov} were making use of the Adler-Kostant-Symes scheme they were used for a study of the motion of $n$ harmonic oscillators near an equilibrium position.

\end{remark}

From the result above it is clear that to find the symmetric maps $S:\vv \to \vv$ such that $[J,S]=0$ is equivalent to determine the skew-symmetric maps $T\in \sso(\vv)$ such that $[J,T]=0$. Choose $X_1, X_3, \hdots, X_{2n-1}, X_2, X_4, \hdots, X_{2n}$ the ordered basis of $\vv$ and write $T$ and $J$ in this basis. A general matrix $T\in \sso(2n)$ of the form
$$\left( \begin{matrix}
Q_1 & Q_2 \\
-Q_2^{\tau}  & Q_3 \end{matrix} \right) $$
with  $Q_1, Q_3\in \sso(n)$ real submatrices of $T$, commute with $J$
$$J=\left( \begin{matrix}
0 & -\rm{\bf 1} \\
\rm{\bf 1} & 0 \end{matrix}
\right),$$
 $[J, T]=0$ if and only if $Q_1\in \sso(n)$, $Q_1=Q_3$ and $Q_2=Q_2^{\tau}$.

\

Take the quadratic first integrals given by the symmetric linear maps $A_i:\vv \to \vv$ which correspond to
\begin{equation}\label{syme}
A_iY = \langle Y, X_{2i - 1}\rangle X_{2i - 1} + \langle Y, X_{2i}\rangle X_{2i}, \qquad i \ge 1.
\end{equation}

It is not hard to prove that $[J, A_i]=0$ and $[A_i, A_j]=0$ for all $i, j=1, \hdots, n$, which gives $\{g_{A_i},g_{A_j}\}(p,Y)=0$ by Theorem \ref{teo1}.

Note that the first integrals $f_{Z_1}$, $g_{A_i}$ ($i \ge  0$), are algebraic, that is invariant under 
the action of $\Heis_n$ on $T \Heis_n \simeq \Heis_n \times \hh_n$.

Recall that Killing 
vector fields built the Lie algebra of the isometry group of a given homogeneous nilmanifold. 
So for the Heisenberg Lie group $\Heis_n$ we have
$$Iso(\Heis_n)= K \ltimes \Heis_n$$
where $\Heis_n$ is the normal subgroup of $Iso(\Heis_n)$ corresponding to the  translations on the left by elements of $\Heis_n$, $K$ is the subgroup of orthogonal automorphisms and the action of $K$ on $\Heis_n$ is given by the evaluation map.

 Let $T\in \sso(\vv)$ such that $[J,T]=0$ and $\rho_T$ be the automorphism of $\Heis_n$ given by 
 $$ \rho_T (p)= (e^T p_{v}, z)$$
 where $p_v$ is the projection of $p$ onto $\RR^{2n}$ and $e^B$ is the usual exponential map: $e^B=\sum_{i=0} \frac1{k!}B^k$. Since $\rho_T$ is a group homomorphism the  fact that it is  an isometry at the  identity element implies that  $\rho_T$ is an isometry of $\Heis_n$. 
 
 For this isometry the corresponding Killing vector field computed as 
$$X_T^*(p)=\frac{d}{ds}\bigg|_0 \rho_T (p)= \frac{d}{ds}\bigg|_0(e^{sT} p_{v}, z)$$
  is given by
 \begin{equation}\label{X_T}
X_T^*(p)=T W_{\vv} - \frac12 \la T W_{\vv}, J W_{\vv}\ra Z_1
\end{equation}
 where $T\in \sso(\vv)$,  so that $\exp W=p$  for  the exponential map $\exp: \hh_n \to \Heis_n$ and $W_{\vv}$ denotes 
 the projection of $W$ onto $\vv$ with respect to the orthogonal decomposition $\hh_n=\vv\oplus \zz$.
 
 The Killing vector fields which generate $\hh_n$ as subalgebra of the isometry algebra corresponds to the right-invariant vector fields given by
 $$Z_1^*(p)= \partial_z, \qquad X_{2i-1}^*(p)=\partial_{x_{2i-1}}+\frac12 p_{2i}\partial_z,\qquad
 X_{2i}^*(p)=\partial_{x_{2i}}+\frac12 p_{2i-1}\partial_z.$$

Explicitely, for $X_k^*$ with,  $k=0,\hdots, 2n$,  corresponding to the left-translations isometries, we get the next  first integrals - see (\ref{fkil}):
 $$F_{2i-1}(p,Y)= \la X_{2i-1} + \la W_{\vv}, X_{2i}\ra Z_1, Y\ra \qquad \mbox{ for } X_{2i-1}^*, \, i=1, \hdots, n$$
 $$F_{2i}(p,Y)= \la X_{2i} - \la W_{\vv}, X_{2i-1}\ra Z_1, Y\ra \qquad \mbox{ for } X_{2i}^*, \, i=1, \hdots, n$$
 which for $k = 1, \ldots, 2n$ can be rewritten as
\begin{equation}\label{Fk}
 \begin{array}{rcl}
 F_k(p, Y) & = &\langle Y - J(Y_{\mathfrak z})W_{\mathfrak v}, X_k\rangle \\
  & = & \langle Y, X_k\rangle - \langle J(Y_{\mathfrak z})W_{\mathfrak v}, X_k\rangle,
\end{array}
\end{equation}
 while for $Z_1^*$ one has the function
$$f_{Z_1}(p,Y)=\la Z_1, Y\ra$$
which was introduced in the previous section, see Proposition \ref{propcuad}. 

For the Killing vector field $X_T^*$ corresponding to an element $T$ of the isotropy subgroup $K\subset Iso(\Heis_n)$ one has the first integral 
\begin{equation}\label{FT}
F_T(p, Y)= \la T W_{\vv}, Y_{\vv}\ra - \frac12 \la AW_{\vv}, W_{\vv}\ra \la Z_1, Y\ra
\end{equation}
where $A$ is the symmetric map of the bijection (\ref{simde}) for  $T=JA$.

For these functions one gets the following gradient vector fields
\begin{equation}\label{gradientsFi}
 \begin{array}{rcl}
\grad_{(p,Y)} F_T & = & (-\la Z_1,Y\ra A W_{\vv}- TY_{\vv}, TW_{\vv} -\frac12 \la A W_{\vv},W_{\vv} \ra Z_1)\\

  \grad_{(p, Y)}F_k & = & (\langle Y, Z_1\rangle  J X_{k}, X_{k} + \langle W, J X_k\rangle Z_1) \mbox{ for } k=1, \hdots, 2n
 \\
 \grad _{(p, Y)} f_{Z_1} & = & (0, Z_1)
 \end{array}
\end{equation}
which shows that the gradients of $f_{Z_1}, F_k$ for $k=1, \hdots 2n$ are linearly independent.

The formula in (\ref{poisson2}) says that for $j, k \in \{0, \hdots, 2n\}$ the Poisson bracket is given by
\begin{equation}
 \{F_j, F_k\}(p,Y)= \la Y, Z_1\ra \la X_k, JX_j\ra  
\end{equation}
so that the only non-trivial brackets are
\begin{equation}\label{poisson-fint1}
\{F_{2i-1}, F_{2i}\}= f_{Z_1}.
 \end{equation}

Let $\rho_{T_a}, \rho_{T_b}$ denote a pair of isometries as above with $T_a=JA$, $T_b=JB$. Recall that $T_a, T_b\in \sso(v)$ and $0=[J, T_i]=[J, T_j]$.  Take the corresponding functions $F_{T_i}, F_{T_j}$ as in (\ref{FT}) with corresponding gradient vector fields 
$\grad_{(p,Y)}F_{T_i}=(U,V)$ and $\grad_{(p,Y)}F_{T_j}=(U',V')$. Then by Lemma \ref{lema1} one has 

\vskip .1pt

$\{F_{T_a}, F_{T_b}\}(p,Y)=0$ if and only if $\la V', U\ra- \la V, U'\ra + \la Y, [V', V]=0\ra $. 

\vskip .1pt

In particular for  $(p, Z_1)$ one gets

$$\begin{array}{rcl} 
 \{F_{T_a}, F_{T_b}\}(p,Y) & = &  -\la T_b W_{\vv}, A W_{\vv}\ra + \la T_a W_{\vv}, B W_{\vv}\ra + \la J T_b W_{\vv}, T_a W_{\vv}\ra \\
& = &  -\la JAB W_{\vv},  W_{\vv}\ra + \la JBAW_{\vv},  W_{\vv}\ra +  \la J^2 B W_{\vv}, J A W_{\vv}\ra \\
&=&  -\la  JAB W_{\vv},  W_{\vv}\ra + \la JBA W_{\vv},  W_{\vv}\ra +  \la JAB W_{\vv},  W_{\vv}\ra, \\
\end{array}
$$ 
so that as in the proof of Theorem \ref{teo1}  $ \{F_{T_a}, F_{T_b}\}(p,Y)=0$ implies $[A,B]=0$ for $T_a=JA, T_b=J B$. 

Conversely it is not hard to see that $\{F_{T_a}, F_{T_b}\}(p,Y)=0$ whenever $[A,B]=0$ for all $(p,Y)$.

\begin{lem}  \label{lema4} Let $T_a=JA$ and $T_b=JB$ denote skew symmetric derivations. Let $F_{T_a}, F_{T_b}: T \Heis_n \to \RR$ be the functions  defined above. It holds
$$\{F_{T_a}, F_{T_b}\}(p,Y)=0 \, \mbox{ if and only if } [A,B]=0. $$
Moreover, if $[A,B]=0$ then $\{F_{T_a}, g_{B}\}(p,Y)=0$ whenever  $A,B$ are symmetric maps.
\end{lem}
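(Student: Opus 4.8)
The plan is to compute the bracket directly from the formula in Lemma \ref{lema1}, in its $2$-step form (\ref{poisson2}), feeding in the explicit gradients recorded in (\ref{gradientsFi}). Writing $\grad_{(p,Y)}F_{T_a}=(U,V)$ and $\grad_{(p,Y)}F_{T_b}=(U',V')$ with
$$U=-\la Z_1,Y\ra A W_{\vv}-T_aY_{\vv},\qquad V=T_aW_{\vv}-\tfrac12\la AW_{\vv},W_{\vv}\ra Z_1,$$
and the analogous expressions for $(U',V')$ obtained by replacing $A$ with $B$, the bracket $\{F_{T_a},F_{T_b}\}(p,Y)=\la V',U\ra-\la V,U'\ra+\la j(Y_{\zz})V'_{\vv},V_{\vv}\ra$ splits into pieces pairing a $\vv$-vector against $Z_1$ (which vanish) and genuine $\vv$-pairings. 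These are exactly the reductions already carried out in the paragraph preceding the statement, so the proof is largely a matter of assembling that computation and adding one short variant for the second assertion.

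The reductions use the structural facts $A=A^t$, $B=B^t$, $[J,A]=[J,B]=0$, together with $J^t=-J$ and $J^2=-\id$ on $\vv$: they let me slide $J$ past $A$ and $B$ and rewrite every surviving inner product either as a quadratic form $\la M W_{\vv},W_{\vv}\ra$ or as a pairing $\la M W_{\vv},Y_{\vv}\ra$. After the cross terms proportional to $\la JAB W_{\vv},W_{\vv}\ra$ cancel, the remainder collapses to a single expression governed by the commutator,
$$\{F_{T_a},F_{T_b}\}(p,Y)=-\tfrac12\la Y,Z_1\ra\,\la J[A,B]W_{\vv},W_{\vv}\ra-\la [A,B]W_{\vv},Y_{\vv}\ra.$$

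The observation that makes the equivalence clean is that $[A,B]$ is skew-symmetric (being the transpose-reversing commutator of two symmetric maps) and commutes with $J$ (since $A$ and $B$ do), so that $J[A,B]$ is a \emph{symmetric} operator on $\vv$. As $J$ is invertible, the quadratic form $W_{\vv}\mapsto\la J[A,B]W_{\vv},W_{\vv}\ra$ vanishes identically if and only if $J[A,B]=0$, i.e.\ if and only if $[A,B]=0$. For the forward implication I would evaluate at $Y=Z_1$ (so $Y_{\vv}=0$), exactly as in the text just above, reducing the bracket to $-\tfrac12\la J[A,B]W_{\vv},W_{\vv}\ra$ and reading off $[A,B]=0$; the converse is immediate from the displayed general formula. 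For the ``moreover'' part I would repeat the computation with $g_B$, whose gradient is $\grad_{(p,Y)}g_B=(0,BY)$ by Proposition \ref{propcuad}; the same cancellations leave $\{F_{T_a},g_B\}(p,Y)=\tfrac12\la J[A,B]Y_{\vv},Y_{\vv}\ra$, which is zero once $[A,B]=0$.

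The main obstacle is essentially bookkeeping: keeping the $\vv$- and $\zz$-components straight and applying the commutation relations and (skew-)symmetry in the right order so that the many terms collapse to a multiple of $[A,B]$. The one genuinely conceptual step — the one I would isolate and state explicitly — is the symmetry of $J[A,B]$, since it is precisely what upgrades ``the quadratic form vanishes'' to the clean algebraic criterion $[A,B]=0$, rather than merely to the vanishing of a symmetric part.
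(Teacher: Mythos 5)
Your proposal is correct and follows essentially the same route as the paper: compute the bracket from the gradients in (\ref{gradientsFi}) via (\ref{poisson2}), reduce using $A=A^t$, $B=B^t$, $[J,A]=[J,B]=0$ and $J^2=-\id$, evaluate at $Y=Z_1$ for the forward implication, and read the converse off the general formula; your closed form $-\tfrac12\la Y,Z_1\ra\la J[A,B]W_{\vv},W_{\vv}\ra-\la[A,B]W_{\vv},Y_{\vv}\ra$ agrees with the paper's $\la[T_a,T_b]W_{\vv},Y_{\vv}\ra+\la Y,Z_1\ra\la JBAW_{\vv},W_{\vv}\ra$. Your one added touch --- isolating the symmetry of $J[A,B]$ to justify passing from the vanishing of the quadratic form to $[A,B]=0$ --- is exactly the step the paper leaves implicit with ``as in the proof of Theorem \ref{teo1}'', so it is a welcome clarification rather than a different argument.
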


Making use of (\ref{gradientsFi}) the other Poisson brackets at every point $(p,Y)$ for  $p=\exp W$, satisfy
\begin{itemize}
\item $\{F_{T_a}, g_B\}(p,Y)=\la J A B Y_{\vv}, Y_{\vv}\ra$ for $T_a=JA$.

\item Let $T_a=JA$, $T_b=JB$ for $A,B$ symmetric. 
$$\begin{array}{rcl}
\{ F_{T_a}, F_{T_b}\} (p,Y) & = & \la [T_a, T_b] W_{\vv}, Y_{\vv}\ra + \la Y, Z_1\ra \la J BA W_{\vv}, W_{\vv}\ra\\
& = &\la [T_a, T_b] W_{\vv}, Y_{\vv}\ra + \la Y, Z_1\ra \frac12 \la (J BA - JAB)W_{\vv}, W_{\vv}\ra\\
&=& F_{[T_a,T_b]}.
\end{array}
$$ 
Notice that $\frac12 (JAB - JBA)$ is the symmetric part of $JAB$.

\item $\{F_T, F_k\}(p,Y)= F_k(p, T Y_{\vv}+ Y_{\zz})$, for $k=1, \hdots, 2n$.
\item   $\{f_{Z_1}, F_i\}=0$ for all $i=1, \hdots, n$.

\end{itemize}

Let $A_i: \vv \to \vv$ the symmetric maps as in (\ref{syme}) and let $T_i=JA_i$ the corresponding skew-symmetric maps. Then from the equations of the gradients above one can see that $F_{T_i}, g_{A_j}$ is a set of $2n$ linearly independent functions on an open dense subset of $T \Heis_n$.

Note that the set $\{JA_i\}$ correspond to a Cartan subalgebra of $\sso(\RR^{2n})$ which in particular gives an abelian subalgebra of skew-symmetric derivations. 

 We are now in conditions to prove the following result.

\begin{theorem}\label{heisenberg-integrable}
   The geodesic flow on $T \Heis_n$ is completely integrable in the sense of Liouville. In fact the set
	$$\mathcal G=\{ E\} \cup \{g_{A_i}\}_{i = 1}^n \cup \{F_{T_k}\}_{k = 1}^n$$ gives a family of commuting first integrals of the geodesic flow. 
	
	The  sets
  \begin{enumerate}
  \item 
    $$
    \mathcal F = \{f_{Z_1}\} \cup \{g_{A_i}\}_{i = 1}^n \cup \{F_{2k - 1}\}_{k = 1}^n
    $$
  \item  and
    $$
    \mathcal F' = \{f_{Z_1}\} \cup \{g_{A_i}\}_{i = 1}^n \cup \{F_{2k}\}_{k = 1}^n
    $$
     \end{enumerate}
    give two independent commuting families of first integrals of the geodesic flow.
 
\end{theorem}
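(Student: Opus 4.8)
The plan is to verify, for each of the three families separately, the two defining properties of Liouville integrability on the $(2n+1)$-dimensional group $\Heis_n$: that the listed functions Poisson-commute pairwise (and with the energy $\En$), and that their gradients are linearly independent on an open dense subset of $T\Heis_n$. Since each of $\mathcal G$, $\mathcal F$ and $\mathcal F'$ has exactly $2n+1=\dim\Heis_n$ members, the conclusion will follow from the definition of complete integrability once these two properties are in place. The first-integral property is already available: $\En$ trivially, each $g_{A_i}$ by Theorem \ref{teo1}(i) (since $[J,A_i]=0$), and $f_{Z_1}$, $F_k$, $F_{T_k}$ as the integrals attached to Killing fields through (\ref{fkil}). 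In particular every member of every family Poisson-commutes with $\En$ by the very definition of a first integral (and $\{\En,\En\}=0$), so it remains to check pairwise involution among the non-energy members and then functional independence.

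For involution I would just assemble the bracket formulas already derived. For $\mathcal G$: $\{g_{A_i},g_{A_j}\}=0$ because $[A_i,A_j]=0$ (Theorem \ref{teo1}(ii)); $\{F_{T_i},F_{T_j}\}=F_{[T_i,T_j]}=0$ because $\{T_i=JA_i\}$ spans an abelian subalgebra of $\sso(\RR^{2n})$ (equivalently, by Lemma \ref{lema4} and $[A_i,A_j]=0$); and $\{F_{T_i},g_{A_j}\}=0$ again by Lemma \ref{lema4}, using $[A_i,A_j]=0$. For $\mathcal F$ and $\mathcal F'$ I would use the bracket $\{F_j,F_k\}(p,Y)=\langle Y,Z_1\rangle\langle X_k,JX_j\rangle$, which vanishes on same-parity indices since $JX_{2k-1}=X_{2k}\perp X_{2l-1}$ and $JX_{2k}=-X_{2k-1}\perp X_{2l}$; combined with $\{f_{Z_1},F_i\}=0$, the already-noted $\{g_{A_i},g_{A_j}\}=0$, and the two short computations $\{f_{Z_1},g_{A_i}\}=0$ and $\{g_{A_i},F_k\}=0$ read directly off (\ref{poisson2}) (the $\vv$-component of $\grad f_{Z_1}$ vanishes, and the two $g_{A_i}$--$F_k$ terms cancel by symmetry of the metric), this yields involution of $\mathcal F$ and $\mathcal F'$.

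The more delicate point is functional independence, which I would establish by reading off the gradients from (\ref{gradientsFi}) and testing a hypothetical linear relation at the base point $p=e$, that is $W=0$, with $Y=\sum_j(y_{2j-1}X_{2j-1}+y_{2j}X_{2j})+y_0Z_1$ having all coordinates $y_0,\dots,y_{2n}$ nonzero. At $W=0$ one has $\grad\En=(0,Y)$, $\grad g_{A_i}=(0,A_iY)$, $\grad F_{T_k}=(-T_kY_{\vv},0)$, and $\grad F_{2k-1}=(\langle Y,Z_1\rangle X_{2k},X_{2k-1})$. For $\mathcal G$, the first ($\nn$-valued) component of a relation reads $\sum_k\nu_k T_kY_{\vv}=0$; since $T_kY_{\vv}=y_{2k-1}X_{2k}-y_{2k}X_{2k-1}$ lies in the $k$-th coordinate plane and these planes are mutually orthogonal, each $\nu_k=0$. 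The second component then becomes $\lambda Y+\sum_i\mu_iA_iY=0$, whose $\zz$-part forces $\lambda=0$ and whose $\vv$-part, using that $A_iY$ is the projection of $Y$ onto the $i$-th plane, forces every $\mu_i=0$. The same two-step argument handles $\mathcal F$ and $\mathcal F'$: the first component gives all $\gamma_k=0$ because $\langle Y,Z_1\rangle\neq0$ and the $X_{2k}$ (resp. $X_{2k-1}$) are independent, after which $\alpha Z_1+\sum_i\mu_iA_iY=0$ forces $\alpha=0$ and every $\mu_i=0$.

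Finally I would upgrade this pointwise independence to an open dense set: since $\exp$ is the identity, $W=p$ and all these functions are polynomial in the coordinates of $\Heis_n\simeq\RR^{2n+1}$, so a suitable $(2n+1)\times(2n+1)$ minor of the gradient matrix is real-analytic; being nonzero at $(e,Y)$ it cannot vanish identically, whence its zero locus is nowhere dense and independence holds on an open dense subset. The main obstacle is precisely this independence step for $\mathcal G$, where the integrals $F_{T_k}$ couple the $\vv$- and $\zz$-directions; the decoupling trick of evaluating at $W=0$, which kills the $W$-dependent terms and separates the $F_{T_k}$ from $\En$ and the $g_{A_i}$ via the two $\nn$-components, is what makes the determinant computation transparent. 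Involution, by contrast, is essentially bookkeeping with the bracket formulas already at hand.
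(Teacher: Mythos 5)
Your proposal is correct and follows essentially the same route as the paper: involution is assembled from Theorem \ref{teo1}, Lemma \ref{lema4} and the explicit bracket formulas, and integrability then rests on linear independence of the gradients on an open dense set. The only difference is that you actually carry out the independence verification (evaluating the gradients at $W=0$ and invoking analyticity), a step the paper merely asserts from the gradient formulas \eqref{gradientsFi}; your computation there is valid.
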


\begin{proof} Lemma \ref{lema4} says that

\begin{itemize}
\item  $\{F_{T_i}, F_{T_j}\}=0$ and $\{F_{T_i}, g_{A_j}\}=0$ for $T_i=JA_i$ with $i=1, \hdots, n$ and 

\item $\{g_{A_i}, g_{A_j}\}=0$ since $[A_i, A_j]=0$,
 \end{itemize}
and since all of them are first integrals the fact that this is a completely integrable set follows from the condition of linearly independence of the set on a open dense subset of $T \Heis_n$.

For the other families by making use of the information already given we have that 
\begin{itemize}
 \item $\{F_{2k - 1}, F_{2j-1}\}=0=\{F_{2k}, F_{2j}\}$ for all $k, j=1, \hdots, n$ and $k\neq j$.
\end{itemize}
We should compute the other Poisson brackets.
  $$
	\begin{array}{rcl}
    \{f_{Z_1}, g_{A_i}\}(p, Y) & =  &\langle Z_1, j(Y_{\mathfrak z})A_iY\rangle + \langle Y,  [Z_1, A_iY]\rangle = 0, \\
    \{g_{A_i}, F_{2j - 1}\}(p, Y)  
    & = &-\langle j(Y_{\mathfrak z})A_iY, X_{2j - 1}\rangle + \langle Y, [A_iY, X_{2j - 1}]\rangle = 0, \\
    \{g_{A_i}, F_{2k}\}(p, Y)  
    &  = & -\langle j(Y_{\mathfrak z})A_iY, X_{2k}\rangle + \langle Y, [A_iY, X_{2k}]\rangle = 0,
    \end{array}
		$$
   which completes the proof of the theorem.
\end{proof}

\begin{remark} It turns out that the families given in \cite{Bu1} are of the same type as the families in $\mathcal F$  and $\mathcal F'$ above. While the first  integrals $F_{T_j}$ are new. 

Notice that in the family $\mathcal G$  all the first integrals are obtained from the Lie algebra of the isotropy subgroup  $ K \subset Iso(\Heis_n)$. 
\end{remark}

\begin{remark}
  In the simply connected case we are working on all the first integrals introduced in Theorem \ref{heisenberg-integrable} are analytic. As we shall see later, this cannot be achieved for compact quotients of $\Heis_n$.
\end{remark}

The Poisson brackets of the functions $\{F_T, F_k, f_{Z_1}\}$ for $T\in \kk$, $F_k$, $k=1, \hdots, 2n$ give the proof of the next result.

\begin{theorem} \label{thm33}
 The linear morphism between the Killing vector fields on $\Heis_n$ and its image in $C^{\infty}(T\Heis_n)$ given by
 $$X^* \quad \longrightarrow \quad f_{X^*}$$
 as in (\ref{fkil}) builds a Lie algebra isomorphism between the isometry Lie algebra of $\Heis_n$ and its
 image equipped with the Poisson bracket. 
 \end{theorem}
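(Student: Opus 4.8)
The plan is to establish that the linear map $X^* \mapsto f_{X^*}$ is a Lie algebra homomorphism with respect to the Poisson bracket, and then verify injectivity to upgrade it to an isomorphism onto its image. The isometry Lie algebra of $\Heis_n$ decomposes as $\mathfrak{iso}(\Heis_n) = \kk \oplus \hh_n$, where $\kk$ consists of skew-symmetric derivations (realized via the maps $T = JA$ with $[J,T]=0$) and $\hh_n$ corresponds to the right-invariant Killing fields. Under the assignment \eqref{fkil}, these produce respectively the functions $F_T$ for $T \in \kk$, the functions $F_k$ for $k=1,\ldots,2n$, and $f_{Z_1}$ for $Z_1$. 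Since the map is linear by construction, it suffices to check that it respects brackets on a spanning set, i.e.\ that $\{f_{X^*}, f_{Y^*}\} = f_{[X^*,Y^*]}$ for $X^*, Y^*$ ranging over the generators $F_T, F_k, f_{Z_1}$.

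First I would assemble the Poisson bracket computations already carried out in the excerpt. The relations $\{F_{2i-1}, F_{2i}\} = f_{Z_1}$ from \eqref{poisson-fint1}, the identity $\{F_{T_a}, F_{T_b}\} = F_{[T_a,T_b]}$, the formula $\{F_T, F_k\}(p,Y) = F_k(p, TY_{\vv} + Y_{\zz})$, and $\{f_{Z_1}, F_i\} = 0$ collectively cover every pair of generators. The essential task is to match each Poisson bracket with the image of the corresponding Lie bracket in $\mathfrak{iso}(\Heis_n)$. For the $\hh_n$-part this amounts to recognizing that the brackets among the right-invariant fields $X_k^*$ reproduce the Heisenberg relations $[X_{2i-1}^*, X_{2i}^*] = Z_1^*$, so that $\{F_{2i-1}, F_{2i}\} = f_{Z_1}$ is exactly $f_{[X_{2i-1}^*, X_{2i}^*]}$; and since $\kk$ normalizes $\hh_n$, the relation $\{F_T, F_k\} = F_k(p, TY_{\vv}+Y_{\zz})$ must be identified with $f_{[X_T^*, X_k^*]}$, where $[X_T^*, X_k^*]$ is the Killing field obtained by applying the derivation $T$ to $X_k$.

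The main obstacle will be the careful bookkeeping in this last identification on the semidirect-product part: one must verify that the function $F_k(p, TY_{\vv}+Y_{\zz})$ really is $f_{X^*}$ for the Killing field $X^* = [X_T^*, X_k^*]$, which requires knowing how the Lie bracket in $\mathfrak{iso}(\Heis_n) = \kk \ltimes \hh_n$ acts, namely $[T, X_k] = TX_k$ as an element of $\vv \subset \hh_n$. I would make this explicit by computing $[X_T^*, X_k^*]$ directly from the formulas for the Killing fields, or equivalently by using that the isotropy action of $K$ on $\Heis_n$ induces the adjoint action on $\hh_n$, and then comparing with the displayed Poisson bracket. Once all brackets on generators match, linearity extends the homomorphism property to the whole Lie algebra.

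Finally I would address injectivity. Since the assignment $X^* \mapsto f_{X^*}$ is linear, its kernel is a subset of $\mathfrak{iso}(\Heis_n)$; a Killing field $X^*$ lies in the kernel exactly when $\la X^*(p), Y\ra = 0$ for all $(p,Y)$, forcing $X^* \equiv 0$ because a nonzero Killing field is nonvanishing on an open set and pairs nontrivially with some tangent vector. Hence the map is injective, and therefore a Lie algebra isomorphism onto its image inside $C^\infty(T\Heis_n)$ with the Poisson bracket, as claimed.
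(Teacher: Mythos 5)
Your proposal is correct and, for the homomorphism part, follows the paper's route exactly: both decompose $\mathfrak{iso}(\Heis_n)=\kk\oplus\hh_n$, write a general Killing field as $X^*=X_T^*+\sum_k s_kX_k^*+zZ_1^*$ so that $f_{X^*}=F_T+\sum_k s_kF_k+zf_{Z_1}$, and then rely on the previously computed Poisson brackets $\{F_{2i-1},F_{2i}\}=f_{Z_1}$, $\{F_{T_a},F_{T_b}\}=F_{[T_a,T_b]}$, $\{F_T,F_k\}(p,Y)=F_k(p,TY_{\vv}+Y_{\zz})$ and $\{f_{Z_1},F_k\}=0$ to match brackets on a spanning set. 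Like the paper, you leave the identification of $F_k(p,TY_{\vv}+Y_{\zz})$ with $f_{[X_T^*,X_k^*]}$ as a computation still to be carried out, so neither argument is more complete than the other on that point, though you at least flag it explicitly as the delicate step. Where you genuinely diverge is injectivity: the paper evaluates the identity $0=F_T+\sum_k s_kF_k+zf_{Z_1}$ at judiciously chosen points ($Y=Z_1$, $W=tX_j$, and so on) to force $A=0$, $s_k=0$, $z=0$, which amounts to proving linear independence of the generating functions by hand; your observation that $f_{X^*}\equiv 0$ forces $\la X^*(p),v\ra=0$ for every $v\in T_p\Heis_n$ and hence $X^*\equiv 0$ is shorter and fully rigorous (taking $v=X^*(p)$ already suffices, so you do not even need the remark about a nonzero Killing field being nonvanishing on an open set). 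Both arguments are valid; yours buys brevity, while the paper's computation makes the linear independence of $\{F_T,F_k,f_{Z_1}\}$ explicit, a fact it also exploits for the independence count in the integrability theorems.
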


\begin{proof} In fact, a Killing vector field $X^*$ can be written as $X^*=X_T^*+X_U^*$ for $T+U\in \kk \oplus \hh_n$. Moreover
$U(p)= V(p) + z Z_1^*$ and $V(p)=\sum_k s_k X_k^*(p)$. Thus
$$f_{X^*}(p,Y)=F_T(p,Y)+ \sum_k s_k F_k(p,Y) + z f_{Z_1}(p,Y)
$$
so that $X^* \to f_{X^*}$ is a Lie algebra homomorphism. It is injective: in fact assume
\begin{equation}\label{inj}
0=F_T(p,Y)+ \sum_k s_k F_k(p,Y) + z f_{Z_1}(p,Y)\quad \mbox{  for all $(p,Y)$.}
\end{equation}
By taking $p=\exp Z_1$, $Y=Z_1$ one gets $z=0$.

\vskip .1pt

Take $Y=Z_1$ so that $F_T(p,Y)=-\frac12 \la AW_{\vv}, W_{\vv}\ra$ and $F_k(p,Z_1)=- \la JW_{\vv}, X_k \ra$. Thus equality (\ref{inj}) becomes
$$0=-\frac12 \la AW_{\vv}, W_{\vv}\ra - \sum_k s_k \la JW_{\vv}, X_k \ra.$$

If $\la A X_j, X_j\ra=0$ for all $j=1, \hdots, 2n$ then $A=0$ and so $F_T=0$. Choosing $W= X_l$ one gets $ 0 = \sum_k s_k \la JX_l, X_k \ra = \pm s_l$ which says $s_l=0$ for every $l$.

If $\la A X_j, X_j\ra\neq 0$ then there is $j$ such that $a_{jj}=\la AX_j, X_j\ra \neq 0$. Take $W= t X_j$ for $t\in \RR$. Then
(\ref{inj}) becomes
$0=-\frac12 a_{jj}t^2 - \pm s_jt= t( -\frac12 ta_{jj}-\pm s_j)$ which should holds for every $t$. The sign $\pm$ depends on the parity of $j$. Thus $a_{jj}=0$ and $s_j =0$ for all $j$. And $X^* \to f_{X^*}$ is injective onto its image.
\end{proof}
 
 So far we know this is the first example among nilpotent Lie groups of this isomorphism. The question appeared in \cite{Th}. 
 It could be interesting to know if there are more examples of this situation among nilpotent Lie groups and if is there a 
 relationship with the complete integrability of the geodesic flow.

\begin{remark} The metric considered in this section make of $N$ a naturally reductive Riemannian space. That means that there exists a Lie group of isometries $G$ acting transitively on $N$ such that its Lie algebra $\ggo$ splits as $\ggo=\hh\oplus \mm$ where $\hh$ represents the Lie algebra of the isotropy subgroup and $\mm$ is a $\Ad(H)$-invariant complement such that
$$\la x, [y,z]_{\mm} \ra + \la y, [x,z]_{\mm}\ra=0 \qquad \mbox{ for all }x,y,z\in \mm.$$
The Heisenberg Lie group does not correspond to the examples  in \cite{Th}.
\end{remark}

\subsection{ Riemannian Heisenberg manifolds} 

Let $\Heis_n$ the $(2n + 1)$-di\-men\-sio\-nal Heisenberg group endowed with the standard Riemannian metric, which is defined in the beginning of this section. For each $n$-tuple $r = (r_1, \hdots, r_n) \in (\mathbb Z^+)^n$ such that $r_1 \mid r_2 \mid \cdots \mid r_n$, we define
\begin{equation}\label{eq:lattice}
  \Lambda_r = \{(v,z):  v = (x, y) \text{ with } x\in r\mathbb Z^n,\, y\in 2\mathbb Z^n,\, z\in\mathbb Z\},
\end{equation}
where $x = (x_1, \ldots, x_n) \in r\mathbb Z^n$ means that $x_i \in r_i\mathbb Z$ for all $i = 1, \ldots, n$.

It follows from \cite{GW} that this family classifies the cocompact discrete subgroups of $\Heis_n$ up to isomorphism. This is explained in the following remark.

\begin{remark} 
  In several works the Heisenberg Lie group is defined started with the triangular $(2n + 2) \times (2n + 2)$ real matrices of the form
  $$\gamma(x,y,t) = \left( \begin{matrix}
    1 & x & t\\
    0 & I_n & y^{\tau} \\
    0 & 0 & 1
  \end{matrix}
  \right),
  $$
  where $x, y \in \RR^n$ and  $I_n$ is the $n \times n$ identity matrix.
  The map $\Phi$ defined as $\Phi(\gamma(x,y,t)) = (x, y, t - \frac{1}{2} x \cdot y)$ gives an isomorphism with the Heisenberg Lie group defined above in the first paragraphs of this section. In \cite{GW} it is proved that any lattice on the Heisenberg group, with this matrix presentation, is isomorphic to one of the form
  $$
  \Gamma_r = \{\gamma(x, y, t): x \in r\mathbb Z^n, \, y \in \mathbb Z^n, \, t \in \mathbb Z\},
  $$
  where $r = (r_1, \ldots, r_n) \in (\mathbb Z^+)^n$ is such that $r_1 \mid \cdots \mid r_n$. Moreover, $\Gamma_r$ is isomorphic to $\Gamma_s$ if and only if $r = s$. Recall that, $\Phi^{-1}(\Lambda_r) \subset \Gamma_r$. This contention is strict, but one can still prove that $\Lambda_r$ is isomorphic to $\Gamma_r$. For the sake of completeness we include the proof of this fact in the following remark. 
\end{remark}

\begin{remark}
  Recall that the center $Z(\Gamma_r)$ of $\Gamma_r$ is $\{\gamma(0, 0, t): t \in \mathbb Z\}$ for all $r$. So $Z(\Gamma_r)$ is cyclic with two generators $\gamma(0, 0, 1)$ and $\gamma(0, 0, -1)$. As it follows from \cite{GW}, for each $i = 1, \ldots, n$ we have that 
  $$
  \gamma(r_ie_i, 0, 0)\gamma(0, e_i, 0)\gamma(r_ie_i, 0, 0)^{-1}\gamma(0, e_i, 0)^{-1} = \gamma(0, 0, 1)^{r_i},
  $$
  and for all $s \in (\mathbb Z^+)^n$ such that $s_1 \mid \cdots \mid s_n$, $s \neq r$, there are no elements $\gamma_1, \gamma_2, \ldots, \gamma_{2n} \in \Gamma_s$ such that 
  $$
  \gamma_{2i - 1}\gamma_{2i}\gamma_{2i - 1}^{-1}\gamma_{2i}^{-1} = \gamma(0, 0, \pm 1)^{r_i}.
  $$
  This shows, in particular, that $\Gamma_r$ is not isomorphic to $\Gamma_s$ if $r \neq s$. Now, from the classification theorem in \cite{GW}, we have that $\Lambda_r$ is isomorphic to $\Gamma_s$ for some $s$. But 
  $$
  (r_ie_i, 0, 0)(0, 2e_i, 0)(-r_ie_i, 0, 0)(0, -2e_i, 0) = (0, 0, r_i) = (0, 0, 1)^{r_i},
  $$
  where $(0, 0, 1)$ is one of the two generators of $Z(\Lambda_r)$. Therefore, $\Lambda_r$ is isomorphic to $\Gamma_r$.

\end{remark}

Let $\Lambda_r$ be as defined in (\ref{eq:lattice}) and let assume that $\Heis_n$, $\Lambda_r \backslash \Heis_n$ are endowed with the standard metric. Since the quotient projection $\pi: \Heis_n \to \Lambda_r \backslash \Heis_n$ is a Riemannian submersion and furthermore a local isometry, we can identify the tangent bundle of $\Lambda \backslash \Heis_n$ with $(\Lambda_r \backslash \Heis_n) \times \hh_n$. The projection $\pi$ maps geodesics into geodesics and the energy function $\tilde E$ of $T(\Lambda_r \backslash \Heis_n)$ is related to the energy function $E$ of $T\Heis_n$ by
$$
\tilde E(\Lambda_rp,Y) = E(p, Y) = \frac{1}{2}\langle Y, Y\rangle
$$
and this does not depend on the given representative.

Since the integrals $f_{Z_1}$, $g_{A_i}$ of the geodesic flow of $T\Heis_n$, as given in Theorem \ref{heisenberg-integrable} do not depend on the first coordinate, they descend to first integrals 
$$
\tilde f_{Z_1}(\Lambda_rp,Y) = f_{Z_1}(p, Y), \qquad \tilde g_{A_i}(\Lambda_rp, Y) = g_{A_i}(p, Y)
$$
of the geodesic flow of $T(\Lambda_r \backslash \Heis_n)$. Moreover, such first integrals are in involution, since for all $f,g \in C^\infty(T(\Lambda_r \backslash \Heis_n))$ we have
$$
\{f \circ \pi, g \circ \pi\} = \{f, g\} \circ \pi.
$$

Note that the integrals $F_k$, $k = 1, \ldots, 2n$, from Theorem \ref{heisenberg-integrable} do not descend to the quotient. However, let $(p, Y) \in T\Heis_n$ and $q \in \Lambda_r$, say $p = (x,y,z)$ and $q = (x',y',z')$, with $x,y \in \mathbb R^n$, $z \in \mathbb R$, $x' \in r\mathbb Z^n$, $y' \in 2\mathbb Z^n$ and $z' \in \mathbb Z$. Take $W, W', W''\in  \hh_n$ such that  $\exp W=  p$,  $\exp W'=q$ and $\exp W''=qp$. Observe that $W''_{\vv}= W_{\vv} + W'_{\vv}$. Thus one has  
\begin{align*}
  F_k(qp, Y) 
  & = \langle Y, X_k\rangle - \langle Y, Z_1\rangle\langle J(W'_\mathfrak v +  W_\mathfrak v), X_k\rangle \\
  & = F_k(p, Y) - f_{Z_1}(p, Y)\langle J W'_\mathfrak v, X_k\rangle.
\end{align*}
Since $\langle JW'_\mathfrak v, X_k\rangle \in \mathbb Z$ we have that 
$$
F_k(qp, Y) = F_k(p, Y) \mod f_{Z_1}(p, Y)\mathbb Z
$$
and since $f_{Z_1}$ is a first integral of the geodesic flow we have that the function
$$
\hat F_k(p, Y) = \sin\left(2\pi\frac{F_k(p, Y)}{f_{Z_1}(p, Y)}\right)
$$
descends to $\Lambda_r \backslash \Heis_n$ and is constant along the integral curves of the geodesic vector field in $T(\Lambda_r \backslash \Heis_n)$. In order to get a smooth first integral let 
$$
\bar F_k(p, Y) = e^{-1/f_{Z_1}(p, Y)^2}\hat F_k(p, Y)
$$
and let us define
$$
\tilde F_k(\Lambda_rp, Y) = \bar F_k(p, Y).
$$

So the functions $\tilde F_k$ are smooth (non-analytic) first integrals for the geodesic flow on $T(\Lambda_r \backslash \Heis_n)$. It follows from a direct calculation that the families $f_{Z_1}, g_{A_i}, \bar F_{2k - 1}$ and $f_{Z_1}, g_{A_i}, \bar F_{2k}$, $i, k = 1, \ldots, n$ are in involution. So the geodesic flow in $T(\Lambda_r \backslash \Heis_n)$ is completely integrable in the sense of Liouville. 

\begin{cor}
Let $\Heis_n$ be the Heisenberg Lie group endowed with the standard metric, and let $\Lambda_r$ defined as in (\ref{eq:lattice}). If $\Lambda_r \backslash \Heis_n$ is the corresponding Heisenberg manifold, then the geodesic flow in $T(\Lambda_r \backslash \Heis_n)$ is completely integrable with smooth first integrals.
\end{cor}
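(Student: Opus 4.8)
The plan is to transport the complete integrability already established on the universal cover down to the compact quotient. Since the projection $\pi\colon \Heis_n \to \Lambda_r\backslash\Heis_n$ is a local isometry and a Riemannian submersion, it carries geodesics to geodesics, and the identity $\{f\circ\pi, g\circ\pi\} = \{f,g\}\circ\pi$ holds for the induced symplectic structures on the two tangent bundles. Consequently a smooth function on $T(\Lambda_r\backslash\Heis_n)$ is a first integral in involution with the others precisely when its pullback to $T\Heis_n$ is; so it suffices to produce $2n+1$ functionally independent $\Lambda_r$-invariant functions on $T\Heis_n$, in involution and commuting with the energy, and then read them off on the quotient.

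I would start from the integrable family of Theorem \ref{heisenberg-integrable}, for instance $\{f_{Z_1}, g_{A_1},\dots,g_{A_n}, F_1, F_3,\dots, F_{2n-1}\}$, which already has the right count $2n+1 = \dim\Heis_n$ and is in involution. The functions $f_{Z_1}$ and $g_{A_i}$ depend only on the fiber variable $Y$ and are invariant under all left translations, so they descend verbatim to smooth first integrals $\tilde f_{Z_1}, \tilde g_{A_i}$ on the quotient tangent bundle.

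The hard part will be the functions $F_k$, which do not descend: under a lattice translation $q\in\Lambda_r$ with $\exp W' = q$ one has $F_k(qp,Y) = F_k(p,Y) - f_{Z_1}(p,Y)\,\langle J W'_{\vv}, X_k\rangle$. The crucial point, and exactly where the specific lattice (\ref{eq:lattice}) is used, is that $\langle J W'_{\vv}, X_k\rangle \in \mathbb Z$ for every $q\in\Lambda_r$, so that $F_k$ is well-defined modulo integer multiples of $f_{Z_1}$. I would then periodize: the function $\sin\!\bigl(2\pi F_k/f_{Z_1}\bigr)$ is insensitive to the integer shift and hence descends, away from the hypersurface $\{f_{Z_1}=0\}$ where the quotient $F_k/f_{Z_1}$ blows up. To repair this I would multiply by the flat factor $e^{-1/f_{Z_1}^2}$, obtaining a function $\bar F_k$ all of whose derivatives vanish as $f_{Z_1}\to 0$; this extends smoothly across $\{f_{Z_1}=0\}$ and still descends to a smooth $\tilde F_k$ on the quotient. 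This simultaneous demand --- descent, smoothness across the zero locus of $f_{Z_1}$, and preservation of the first-integral and involution properties --- is the genuine obstacle, and the flat exponential factor is what makes it work, at the unavoidable expense of real-analyticity.

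It then remains to check the three defining properties. Being a first integral is immediate from the chain rule, since $\bar F_k$ is a smooth function of the two first integrals $F_k$ and $f_{Z_1}$, both constant along geodesics. Involution follows because $f_{Z_1}$ Poisson-commutes with every member of the family (from the bracket relations $\{f_{Z_1}, F_i\}=0$ and $\{f_{Z_1}, g_{A_i}\}=0$), so by the Leibniz rule the brackets among the $\bar F_k$ and with the $\tilde g_{A_i}$ reduce to the already-vanishing brackets of the underlying functions, which one transports to the quotient via $\{f\circ\pi, g\circ\pi\}=\{f,g\}\circ\pi$. Functional independence persists on the open dense set where $f_{Z_1}\neq 0$ and the sine factor has nonvanishing derivative, since the smooth reparametrization preserves the linear independence of the gradients in (\ref{gradientsFi}). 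Both families $\{\tilde f_{Z_1}, \tilde g_{A_i}, \tilde F_{2k-1}\}$ and $\{\tilde f_{Z_1}, \tilde g_{A_i}, \tilde F_{2k}\}$ thus realize Liouville integrability of the geodesic flow on the compact quotient, necessarily by smooth but non-analytic integrals.
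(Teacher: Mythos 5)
Your proposal is correct and follows essentially the same route as the paper: descend $f_{Z_1}$ and the $g_{A_i}$ directly via the local isometry $\pi$ and the identity $\{f\circ\pi,g\circ\pi\}=\{f,g\}\circ\pi$, observe that $F_k$ changes by an integer multiple of $f_{Z_1}$ under the lattice action, periodize with $\sin(2\pi F_k/f_{Z_1})$, and restore smoothness across $\{f_{Z_1}=0\}$ with the flat factor $e^{-1/f_{Z_1}^2}$. The paper's own argument is exactly this construction, so there is nothing to add.
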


\section{The case of a general left-invariant metric on $\Heis_n$}

In this section we show how to construct first integrals for the geodesic flow on $T\Heis_n$ with arbitrary left-invariant metrics on $\Heis_n$. Recall that any left invariant metric $g$ on $\Heis_n$ is isometric to one of the form $\langle\cdot, \cdot\rangle_P$ defined as follows. Let $\langle\cdot, \cdot\rangle$ denote the standard inner product on $\hh_n$ and let $P$ be a symmetric positive definite operator on $\hh_n$, with respect to the standard inner product, which has the matrix form
$$
P = 
\begin{pmatrix}
  \tilde P & \\
   & \lambda
\end{pmatrix},
$$
where $\tilde P: \mathfrak v \to \mathfrak v$ is symmetric and positive definite and $\lambda > 0$. We can think of $\tilde P$ as a symmetric matrix with positive eigenvalues. The metric $\langle\cdot, \cdot\rangle_P$ is defined in $\hh_n$ by
$$
\langle X, Y\rangle_P = \langle PX, Y\rangle.
$$

Let $j_P(Z)$ be the $\langle\cdot, \cdot\rangle_P$-skew-symmetric operator such that 
$$
\langle j_P(Z)X, Y\rangle_P = \langle [X, Y], Z\rangle_P.
$$

Note that
\begin{align*}
  \langle [X, Y], Z\rangle_P & = \langle [X, Y], PZ\rangle =  \langle [X, Y], \lambda Z\rangle \\
  & = \lambda \langle j(Z)X, Y\rangle = \lambda\langle PP^{-1}j(Z)X, Y\rangle \\
  & = \lambda\langle P^{-1}j(Z)X, Y\rangle_P.
\end{align*}

This proves the following lemma.

\begin{lem}
  With the assumption and notation of this section, we have that
 $\lambda j(Z) = \tilde P j_P(Z)$, or equivalently, $j_P(Z) = \lambda \tilde P^{-1}j(Z)$, for all $Z \in \mathfrak z$.
\end{lem}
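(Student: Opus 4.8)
The plan is to obtain the identity by comparing the two defining relations for $j(Z)$ and $j_P(Z)$ after translating everything into the single inner product $\la\,,\,\ra_P$, using the two elementary facts that $P$ is $\la\,,\,\ra$-symmetric and that $Z$ lies in the one-dimensional center $\zz=\RR Z_1$, on which $P$ acts as the scalar $\lambda$.

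First I would record those two facts explicitly. Symmetry of $P$ gives $\la A,B\ra_P=\la PA,B\ra=\la A,PB\ra$ for all $A,B\in\hh_n$, and the block form of $P$ forces $PZ=\lambda Z$ for every $Z\in\zz$. Feeding these into the definition $\la j(Z)X,Y\ra=\la[X,Y],Z\ra$ of the standard $j$-operator produces the chain
\begin{align*}
\la [X,Y], Z\ra_P &= \la [X,Y], PZ\ra = \lambda\,\la [X,Y], Z\ra \\
&= \lambda\,\la j(Z)X, Y\ra = \lambda\,\la P^{-1} j(Z) X, Y\ra_P,
\end{align*}
where the last step again applies $\la A,B\ra_P=\la PA,B\ra$ with $A=P^{-1}j(Z)X$.

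To conclude I would place this against the defining relation for $j_P$, namely $\la[X,Y],Z\ra_P=\la j_P(Z)X,Y\ra_P$. Since both expressions agree for all $X,Y$ and $\la\,,\,\ra_P$ is nondegenerate, this yields $j_P(Z)=\lambda P^{-1}j(Z)$. The single point needing a word of care is that $\Im j(Z)\subseteq\vv$, so on the range of $j(Z)$ the operator $P^{-1}$ acts as $\tilde P^{-1}$; this rewrites the identity as $j_P(Z)=\lambda\tilde P^{-1}j(Z)$, and multiplying through by $\tilde P$ gives the equivalent form $\tilde P j_P(Z)=\lambda j(Z)$. I do not expect any genuine obstacle: the entire content is bookkeeping of which inner product each bracket refers to and invoking $PZ=\lambda Z$ at the correct moment, with the block structure of $P$ guaranteeing the restriction $P^{-1}|_{\vv}=\tilde P^{-1}$ used in the final step.
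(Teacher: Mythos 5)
Your proposal is correct and follows essentially the same route as the paper: the paper establishes the lemma by exactly the chain $\la [X,Y],Z\ra_P=\la [X,Y],PZ\ra=\lambda\la j(Z)X,Y\ra=\lambda\la P^{-1}j(Z)X,Y\ra_P$ and then compares with the defining relation for $j_P(Z)$. Your added remark that $P^{-1}$ restricts to $\tilde P^{-1}$ on $\vv$ (where $j(Z)$ takes its values) is a correct piece of bookkeeping that the paper leaves implicit.
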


We also need the next result.

\begin{lem}
If $f: T\Heis_n \to \mathbb R$ and $\grad_{(p, Y)}f$, $\grad_{(p, Y)}^P f$ are gradients of $f$ with respect to the metrics on $T\Heis_n$ induced by the standard metric and $\langle\cdot,\cdot\rangle_P$ respectively, then
  $$
  \grad_{(p, Y)}f = P\grad_{(p, Y)}^P f,
  $$
where $P$ acts diagonally on $T_{(p, Y)}(T\Heis_n)$. 
\end{lem}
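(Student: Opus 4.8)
The plan is to exploit the fact that the differential $df_{(p,Y)}$ is a metric-independent object: both gradients are, by definition, the representatives of the \emph{same} linear functional $df_{(p,Y)}$ on $T_{(p,Y)}(T\Heis_n)$ with respect to two different inner products. First I would record the explicit form of the two product metrics on $T\Heis_n \simeq \Heis_n \times \hh_n$. Writing a tangent vector as a pair $(U,V)\in\hh_n\times\hh_n$, the product metric induced by the standard metric is $\la (U,V),(U',V')\ra = \la U,U'\ra + \la V,V'\ra$, while the one induced by $\la\,,\,\ra_P$ is $\la (U,V),(U',V')\ra_P = \la PU,U'\ra + \la PV,V'\ra$, since in each of the two factors $\la\cdot,\cdot\ra_P = \la P\cdot,\cdot\ra$.

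Next I would set $\grad_{(p,Y)}f = (U,V)$ and $\grad^P_{(p,Y)}f = (U_P,V_P)$ and equate the two defining relations. For every $(U',V')\in\hh_n\times\hh_n$ one has
$$
\la U,U'\ra + \la V,V'\ra = df_{(p,Y)}(U',V') = \la P U_P,U'\ra + \la P V_P,V'\ra.
$$
Taking $V'=0$ and letting $U'$ range over $\hh_n$ yields $U = P U_P$ by nondegeneracy of the standard inner product; taking $U'=0$ and letting $V'$ range over $\hh_n$ yields $V = P V_P$. Since $P$ acts diagonally on $T_{(p,Y)}(T\Heis_n)$ by $P(U_P,V_P)=(PU_P,PV_P)$, these two identities combine into $\grad_{(p,Y)}f = P\,\grad^P_{(p,Y)}f$, which is the asserted equality.

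There is essentially no hard step here: the statement is a direct consequence of the naturality of the differential together with the block-diagonal relation between the two product metrics. The only point demanding care is the bookkeeping of the product structure, namely that $P$ must be applied separately in the base and fiber factors, which is exactly the content of the phrase \emph{$P$ acts diagonally}. Note that the preceding lemma relating $j_P$ to $j$ is not required for this statement; rather, both lemmas are meant to be combined afterward in order to transfer the integrability and involution conditions established for the standard metric over to an arbitrary left-invariant metric $\la\,,\,\ra_P$.
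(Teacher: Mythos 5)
Your argument is correct and is essentially the paper's own proof: both use that $df_{(p,Y)}$ is metric-independent together with $\langle\cdot,\cdot\rangle_P=\langle P\cdot,\cdot\rangle$ to identify the two gradients, the paper compressing this into the single chain $\langle\grad^P f,S\rangle_P=\langle P\grad^P f,S\rangle=df(S)=\langle\grad f,S\rangle$. Your extra bookkeeping of the two factors of the product metric is just a more explicit rendering of the same computation.
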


\begin{proof}
By a direct calculation,
$$
\langle\grad_{(p, Y)}^P f, S\rangle_P = \langle P\grad_{(p, Y)}^P f, S\rangle = df_{(p, Y)}(S) = \langle \grad_{(p, Y)} f, S\rangle
$$
which proves the lemma.
\end{proof}

So, if $\grad_{(p, Y)} f = (U, V)$, then
$$
\grad_{(p, Y)}^P f = (P^{-1}U, P^{-1}V).
$$

It follows from Proposition \ref{propcuad} that $f_{Z_1}$ is also a first integral for the geodesic flow on $(T\Heis_n, \langle\cdot, \cdot\rangle_P$). 

In order to find quadratic first integrals, we use the following lemma.

\begin{lem}
  An endomorphism $A$ of $\mathfrak v$ is symmetric with respect to $\langle\cdot,\cdot\rangle_P$ if and only if $PA = A^tP$, where $A^t$ is the transpose of $A$ with respect to the standard metric.
\end{lem}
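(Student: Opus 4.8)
The plan is to unwind the definition of $\langle\cdot,\cdot\rangle_P$-symmetry and reduce everything to a single operator identity relative to the standard inner product. First I would record that $A$ is symmetric with respect to $\la\cdot,\cdot\ra_P$ precisely when
$$
\la AX, Y\ra_P = \la X, AY\ra_P \qquad \text{for all } X, Y \in \vv.
$$
Substituting the defining relation $\la U, V\ra_P = \la PU, V\ra$ turns the left-hand side into $\la PAX, Y\ra$ and the right-hand side into $\la PX, AY\ra$, so the symmetry condition becomes $\la PAX, Y\ra = \la PX, AY\ra$ for all $X, Y$.

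The key step is to move the endomorphism $A$ off the second slot using the transpose with respect to the \emph{standard} metric: by the defining property of $A^t$ one has $\la PX, AY\ra = \la A^t PX, Y\ra$. Hence $A$ is $\la\cdot,\cdot\ra_P$-symmetric if and only if $\la PAX, Y\ra = \la A^t PX, Y\ra$ for all $X, Y \in \vv$. Since the standard inner product is nondegenerate, this bilinear identity holds for every $Y$ exactly when $PAX = A^t PX$ for every $X$, that is, when $PA = A^t P$.

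Every implication in the argument is an equivalence, so both directions of the lemma follow at once and no separate converse is needed. The only point worth flagging is bookkeeping rather than substance: as $A$ is an endomorphism of $\vv$, the operator $P$ in $PA = A^t P$ is to be read as its restriction $\tilde P$ to $\vv$, the scalar block $\lambda$ acting on $\zz$ playing no role. I do not expect any genuine obstacle here; the statement is a routine reformulation of symmetry under a modified inner product, and the whole proof reduces to the two displayed computations together with the nondegeneracy of $\la\cdot,\cdot\ra$.
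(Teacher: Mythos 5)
Your proof is correct and follows essentially the same route as the paper: unwind the definition of $\langle\cdot,\cdot\rangle_P$, move $A$ to the first slot via its standard transpose, and conclude by nondegeneracy. The only difference is that you spell out the final nondegeneracy step, which the paper leaves implicit.
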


\begin{proof}
  Let $X, Y \in \mathfrak h_n$ be arbitrary. Then $\langle AX, Y\rangle_P = \langle X, AY\rangle_P$ if and only if $\langle PAX, Y\rangle = \langle PX, AY\rangle$, and this holds if and only if $\langle PAX, Y\rangle = \langle A^tPX, Y\rangle$.
\end{proof}

 Since $\tilde P$ is symmetric with respect to the standard metric, there exist a basis $U_1, \ldots, U_{2n}$ of $\mathfrak v$ such that $\tilde PU_i = \lambda_iU_i$ (with $\lambda_i > 0$, since $\tilde P$ is positive definite). This basis can be chosen orthonormal with respect to the standard metric, and moreover, since $j(Z)$ acts on $\mathfrak v$ as a multiple of the multiplication by $\sqrt{-1}$ on $\mathbb C^n$, we can assume that $U_{2i} = j(Z_1)U_{2i - 1}$. So we can define the operators $\tilde A_i$ on $\mathfrak v$ such that 
$$\tilde A_i U_{2i - 1} = U_{2i - 1}\quad \tilde A_iU_{2i} = U_{2i} \mbox{ and  } \tilde A_iU_k = 0 \,\, \mbox{ if } k \neq 2i - 1, 2i.$$
  
It follows that 
\begin{itemize}
\item $\tilde A_i$ is symmetric with respect to $\langle\cdot, \cdot\rangle_P$, 
 \item $\tilde A_i$ commutes with $j_P(Z)$ and 
\item $[\tilde A_i, \tilde A_j] = 0$ for all $i, j$. 
\end{itemize}
So Theorem \ref{teo1} shows that we get a commuting independent family of first integrals for the geodesic flow on $(T\Heis_n, \langle\cdot, \cdot\rangle_P)$. Namely
$$
g_{\tilde A_i}(p, Y) =  \frac{1}{2}\langle \tilde A_iY, Y\rangle_P,
$$
where $\tilde A_i$ is extended so that $\tilde A_iZ_1 = 0$.

Finally, in order to obtain the remaining integrals we use the Killing fields $U_k^*$, $k = 1, \ldots, 2n$. In fact, we have, in the same manner as in the previous section, that
$$
\tilde F_k(p, Y) = \langle Y, U_k\rangle_P - \langle j_P(Y_{\mathfrak z})W_{\mathfrak v}, U_k\rangle_P
$$
forms a family of first integrals for the geodesic flow such that 
$$\{\tilde F_{2i - 1}, \tilde F_{2j}\}^P =\delta_{ij}f_{Z_1}.$$ 

\begin{theorem}\label{heisenberg-integrable-2}
   The geodesic flow on $(T\Heis_n, \langle\cdot, \cdot\rangle_P)$ is completely integrable in the sense of Liouville. Moreover the sets
  \begin{enumerate}
  \item 
    $$
    \mathcal F = \{f_{Z_1}\} \cup \{g_{\tilde A_i}\}_{i = 1}^n \cup \{\tilde F_{2k - 1}\}_{k = 1}^n
    $$
  \item  and
    $$
    \mathcal F' = \{f_{Z_1}\} \cup \{g_{\tilde A_i}\}_{i = 1}^n \cup \{\tilde F_{2k}\}_{k = 1}^n
    $$
     \end{enumerate}
    give two independent commuting families of first integrals of the geodesic flow.
\end{theorem}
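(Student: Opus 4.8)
The plan is to run the argument of Theorem~\ref{heisenberg-integrable} again, systematically replacing the standard data by their $P$-twisted analogues: the inner product $\la\,,\,\ra$ by $\la\,,\,\ra_P$, the map $j(Z)$ by $j_P(Z)$, the adapted orthonormal basis $\{X_k\}$ by the $\tilde P$-eigenbasis $\{U_k\}$, and the rank-two projections $A_i$ by the operators $\tilde A_i$. The three lemmas of this section are precisely the dictionary that makes this substitution legitimate: they relate $j$ to $j_P$, express $\grad^P f=(P^{-1}U,P^{-1}V)$ in terms of $\grad f=(U,V)$, and characterize $\la\,,\,\ra_P$-symmetry as $PA=A^tP$. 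First I would record that each member of $\mathcal F$ and $\mathcal F'$ is a first integral for the geodesic flow of $(T\Heis_n,\la\,,\,\ra_P)$: the function $f_{Z_1}$ is one by Proposition~\ref{propcuad}, whose proof only uses that $Z_1$ spans $\zz$ and is therefore metric-independent; each $g_{\tilde A_i}$ is one by part~(i) of Theorem~\ref{teo1}, since $\tilde A_i$ is $\la\,,\,\ra_P$-symmetric and commutes with $j_P(Z_1)$; and each $\tilde F_k$ is one because it is the invariant $f_{U_k^*}$ attached to the Killing field $U_k^*$, which is a first integral by the Killing-field lemma.

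Next I would verify that each of the two families is in involution. By part~(ii) of Theorem~\ref{teo1}, $\{g_{\tilde A_i},g_{\tilde A_j}\}^P=0$ because $[\tilde A_i,\tilde A_j]=0$ and $\hh_n$ is non-singular. The mixed brackets $\{f_{Z_1},g_{\tilde A_i}\}^P$, $\{f_{Z_1},\tilde F_k\}^P$ and $\{g_{\tilde A_i},\tilde F_k\}^P$ are computed directly from the Poisson formula~(\ref{poisson2}) with $j_P$ in place of $j$; these are the same short computations that close the proof of Theorem~\ref{heisenberg-integrable}, and each vanishes for the same reason. The only brackets among the Killing integrals that survive are the already recorded $\{\tilde F_{2i-1},\tilde F_{2j}\}^P=\delta_{ij}f_{Z_1}$. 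This is exactly why the two families separate the odd- and even-indexed $\tilde F$'s: inside $\mathcal F$ one meets only $\tilde F_{2k-1}$, so every pair gives $\{\tilde F_{2i-1},\tilde F_{2j-1}\}^P=0$, and inside $\mathcal F'$ only $\tilde F_{2k}$, so $\{\tilde F_{2i},\tilde F_{2j}\}^P=0$.

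It then remains to check linear independence of the gradients on an open dense subset. Using $\grad^P f=(P^{-1}U,P^{-1}V)$ and the explicit gradients computed as in~(\ref{gradientsFi}), now expressed in the basis $\{U_k\}$, one finds $\grad^P_{(p,Y)}f_{Z_1}$ proportional to $(0,Z_1)$, the $\grad^P\tilde F_k$ carrying a nontrivial $U_k$-term in the fibre direction, and the $\grad^P g_{\tilde A_i}$ supported on $\mathrm{span}(U_{2i-1},U_{2i})$; since $P^{-1}$ acts invertibly and diagonally, independence holds on the same open dense set (those $(p,Y)$ with $Y_{\vv}$ generic) as in the standard case. As each family consists of $1+n+n=2n+1=\dim\Heis_n$ first integrals that are pairwise in involution and generically independent, Liouville complete integrability follows. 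The only genuinely non-formal ingredient is the simultaneous adaptation of the $\tilde P$-eigenbasis to the complex structure $J=j(Z_1)$ recorded before the statement, that is, the existence of a standard-orthonormal $\tilde P$-eigenbasis with $U_{2i}=JU_{2i-1}$; this Williamson-type normal form is what forces $\tilde A_i$ to be simultaneously $\la\,,\,\ra_P$-symmetric and $j_P(Z_1)$-commuting, and it is the step I would treat with the most care, everything downstream being a mechanical transcription of the standard-metric argument.
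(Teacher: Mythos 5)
Your proposal is correct and follows essentially the same route as the paper, which gives no separate proof of this theorem but instead relies on exactly the chain you reconstruct: the three dictionary lemmas relating $j$ to $j_P$, $\grad$ to $\grad^P$, and $\la\,,\,\ra$-symmetry to $\la\,,\,\ra_P$-symmetry, followed by Theorem~\ref{teo1} for the $g_{\tilde A_i}$, the Killing fields $U_k^*$ for the $\tilde F_k$, and the bracket relations $\{\tilde F_{2i-1},\tilde F_{2j}\}^P=\delta_{ij}f_{Z_1}$. Your added attention to the generic independence of the gradients and to the existence of a standard-orthonormal $\tilde P$-eigenbasis with $U_{2i}=j(Z_1)U_{2i-1}$ (which the paper asserts without argument) only makes the write-up more careful than the original.
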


\begin{remark} Note that the case of a general Riemannian left-invariant metric on $\Heis_n$ does not give a naturally reductive space. Also the compact quotients considered here  are not globally homogeneous  but locally. 
\end{remark}


\end{document}